\newtheorem{theorem}{Theorem}[section]
\newtheorem{lemma}[theorem]{Lemma}
\newtheorem{proposition}[theorem]{Proposition}
\newtheorem{corollary}[theorem]{Corollary}
\begin{document}

\def\spacingset#1{\renewcommand{\baselinestretch}%
{#1}\small\normalsize} \spacingset{1}

  \title{\bf Lattice-based designs possessing quasi-optimal separation distance on all projections}
  \author{Xu He\hspace{.2cm}\\
    Academy of Mathematics and System Sciences, \\Chinese Academy of Sciences}
  \maketitle

\begin{abstract}
Experimental designs that spread out points apart from each other on projections are important for computer experiments when not necessarily all factors have substantial influence on the response. 
We provide a theoretical framework to generate designs that possess quasi-optimal separation distance on all of the projections and quasi-optimal fill distance on univariate margins. 
The key is to use special techniques to rotate certain lattices. 
One such type of design is densest packing-based maximum projection designs, 
which outperform existing types of space-filling designs in many scenarios. 
Computer code to generate these designs is provided in R package LatticeDesign. 
\end{abstract}

{\it Keywords:}  Design of experiment, Emulation, Gaussian process model, Geometry of numbers, Latin hypercube, Maximin distance design, Maximum projection design, Minkowski's first theorem.

\spacingset{1.45} 

\section{Introduction}
\label{sec:intro}

Computer experiments have become powerful tools in science and engineering~\citep{Santner:book}. 
The separation distance of an experimental design $D \subset [0,1]^p$ is its minimal pairwise distance, 
\[ \rho(D) = \min_{x,y \in D} \left\{ \sum_{k=1}^p (x_k-y_k)^2 \right\}^{1/2}. \]
Designs that achieve the highest separation distance are called maximin distance designs~\citep{Johnson:1990}. 
Being asymptotically D-optimal in Gaussian process emulation, excellent in controlling numeric error, and robust against simulation errors, 
maximin distance designs are a popular type of space-filling design that are useful for computer experiments~\citep{Johnson:1990,Siem:2007,Haaland:2018,Wang:2018}. 

In many circumstances, not all factors have substantial influence on the response. 
In this context, it is desirable that the design be space-filling after projected onto the subspace for active variables. 
Formally, the separation distance of a design $D \subset [0,1]^p$ projected onto dimensions on $\gamma \subset \{1,\ldots,p\}$ is given by 
\[ \rho_\gamma(D) = \min_{x,y \in D} \left\{ \sum_{k \in \gamma} (x_k-y_k)^2 \right\}^{1/2}. \]
We remark that $\rho(D) = \rho_{\{1,\ldots,p\}}(D)$ under this notation.  
Since the active variables are usually not known before experimentation, an ideal design shall possess high $\rho_\gamma(D)$ for every $\emptyset\neq\gamma\subset\{1,\ldots,p\}$. 
Methods that produce designs with high $\rho_\gamma(D)$ not only for $\gamma=\{1,\ldots,p\}$ but also for its subsets include maximin distance Latin hypercube designs~\citep{Morris:1995}, maximum projection designs~\citep{Roshan:2015}, rotated sphere packing designs~\citep{RSPD}, and quasi-Monte Carlo sequences such as scrambled nets~\citep{Owen:1997}. 

In this paper, we propose a new class of space-filling design called densest packing-based maximum projection designs. 
These designs achieve the asymptotically optimal order of $\rho_\gamma(D)$ for all $\emptyset \neq \gamma \subset \{1,\ldots,p\}$, 
i.e., there exists a $c>0$ such that 
\[ \rho_\gamma(D) \geq c n^{-1/\text{card}(\gamma)}, \]
where $\text{card}(\gamma)$ denotes the cardinality of $\gamma$ and $n=\text{card}(D)$. 
To the best of our knowledge, when $p>2$ no known type of designs possesses this property. 
Furthermore, they also possess the optimal order of fill distance on univariate projections, 
i.e., there exists a constant $d$ such that for any $k \in \{1,\ldots,p\}$, 
\[ \eta_k(D) = \sup_{y \in [0,1]} \min_{(x_1,\ldots,x_p) \in D} |x_k-y| \leq d n^{-1}. \]

In constructing densest packing-based maximum projection designs, 
we adopt the same procedure as in generating rotated sphere packing designs,
which consists of generating a number of rotation matrices, obtaining one lattice-based design from each rotation matrix, and selecting the design with the best empirical projective separation distance. 
From this procedure, projective properties of the generated designs are largely determined by the rotation matrix. 
Lacking theoretical results, 
for $p>2$, original rotated sphere packing designs are constructed from random rotation matrices and thus they have no known asymptotic result on projective property. 
The major contribution of this work is analytically deriving the asymptotic properties of a list of newly proposed rotation matrices. 
We then use some of the proposed matrices to generate densest packing-based maximum projection designs and show that they in general have better projective and unprojected separation distances than original rotated sphere packing designs and other types of designs. 
Our method applies to arbitrary $p$ and $n$, but is most advantageous for $p=4$, $8$, and $16$.

\section{Theoretical results}
\label{sec:theory}

In this section, we review the definition of rotated sphere packing designs and derive theoretical results on designs constructed from certain rotation matrices. 
A set of points in $\mathbb{R}^p$ is called a lattice and written as $L(G)$ if $G$ is a non-singular $p\times p$ real matrix and 
$ L = \left\{ a^T G : a \in \mathbb{Z}^p \right\} $. 
For instance, $\mathbb{Z}^p = L(I_p)$ is called the $p$-dimensional integer lattice, where $I_p$ is the $p$-dimensional identity matrix. 
When $R$ is a $p \times p$ orthogonal matrix, $L(GR)$ is a rotation of $L(G)$. 
Thus, such an $R$ is also referred to as a rotation matrix. 

A lattice-based design is a set of points that can be expressed as the intersection of a rotated, rescaled, and translated lattice and the design space $[0,1]^p$. 
In particular, a rotated sphere packing design in $p$ dimensions and with $n$ points is the design constructed by the following three steps~\citep{RSPD}: 

\begin{enumerate}
\item Choose a $G$. 
\item Generate 100 $R$s; for each $R$, find a $\delta$ such that 
\begin{equation}\label{eqn:D}
 D = D(G,R,n,\delta) = \left\{ \{n|\det(G)|\}^{-1/p} (a^T G R + \delta^T) : a \in\mathbb{Z}^p \right\} \cap [0,1]^p.  
\end{equation}
has exactly $n$ points. 
Such a $\delta$ always exists~\citep{RSPD}. 
\item Select the design from the 100 generated designs that have the best empirical projective distance property, 
measured by the maximum projection criterion proposed in~\citet{Roshan:2015}, 
\begin{equation}
\label{eqn:MPC}
\psi(D) = \left[ \{n(n-1)\}^{-1}
\sum_{1\leq i<j\leq n} \frac{1}{\prod_{k=1}^p (x_{i,k}-x_{j,k})^2} \right]^{1/p}.  
\end{equation} 
\end{enumerate}

For any given $G$, there exists a $c(G)$ such that $\rho(D) = n^{-1/p} c(G)$ for any $D$ in (\ref{eqn:D}). 
That is, all types of rotated sphere packing designs possess the optimal order of unprojected separation distance and the constant is determined solely by the lattice used. 
On the other hand, projective separation distances of the designs are determined by $G$ and $R$. 
In particular, designs with asymptotically optimal order of univariate projective separation and fill distances are called quasi-Latin hypercube designs.  
If designs constructed from $G$ and $R$ are proven to be quasi-Latin hypercube designs,  
we call the $R$ a magic rotation for $G$~\citep{RSPD}. 
The magic rotations we are going to propose are beyond ``magic'' because designs constructed from them also possess multivariate properties. 
Before this work, only one magic rotation matrix in $p=2$ has been found~\citep{RSPD}. 

In the rest of this section we provide magic rotations for four useful types of lattices: 
integer lattices, densest packings, thinnest coverings, and interleaved lattices. 
The $L(G)$ in $p$-dimensions that has the highest $n^{1/p} \rho(D)$ for $D$ in (\ref{eqn:D}) is called the densest packing in $p$ dimensions. 
Let 
\begin{equation}\label{eqn:DP24}
G_{\text{DP},4} = \left(\begin{array}{cccc}
1 & 0 & 0 & 1 \\
0 & 1 & 0 & 1 \\
0 & 0 & 1 & 1 \\
0 & 0 & 0 & 2
\end{array}\right) ,
G_{\text{DP},8} = \left(\begin{array}{cccccccc}
1 & 0 & 0 & 0 & 1 & 1 & 1 & 0 \\
0 & 1 & 0 & 0 & 1 & 1 & 0 & 1 \\
0 & 0 & 1 & 0 & 1 & 0 & 1 & 1 \\
0 & 0 & 0 & 1 & 0 & 1 & 1 & 1 \\
0 & 0 & 0 & 0 & 2 & 0 & 0 & 0 \\
0 & 0 & 0 & 0 & 0 & 2 & 0 & 0 \\
0 & 0 & 0 & 0 & 0 & 0 & 2 & 0 \\
0 & 0 & 0 & 0 & 0 & 0 & 0 & 2
\end{array}\right)  ,
\end{equation}
\begin{equation}\label{eqn:DP816}
\quad\text{and }
G_{\text{DP},16} = \left(\begin{array}{cccccccccccccccc}
4 & 0 & 0 & 0 & 0 & 0 & 0 & 0 & 0 & 0 & 0 & 0 & 0 & 0 & 0 & 0 \\
2 & 2 & 0 & 0 & 0 & 0 & 0 & 0 & 0 & 0 & 0 & 0 & 0 & 0 & 0 & 0 \\
2 & 0 & 2 & 0 & 0 & 0 & 0 & 0 & 0 & 0 & 0 & 0 & 0 & 0 & 0 & 0 \\
2 & 0 & 0 & 2 & 0 & 0 & 0 & 0 & 0 & 0 & 0 & 0 & 0 & 0 & 0 & 0 \\
2 & 0 & 0 & 0 & 2 & 0 & 0 & 0 & 0 & 0 & 0 & 0 & 0 & 0 & 0 & 0 \\
2 & 0 & 0 & 0 & 0 & 2 & 0 & 0 & 0 & 0 & 0 & 0 & 0 & 0 & 0 & 0 \\
2 & 0 & 0 & 0 & 0 & 0 & 2 & 0 & 0 & 0 & 0 & 0 & 0 & 0 & 0 & 0 \\
2 & 0 & 0 & 0 & 0 & 0 & 0 & 2 & 0 & 0 & 0 & 0 & 0 & 0 & 0 & 0 \\
2 & 0 & 0 & 0 & 0 & 0 & 0 & 0 & 2 & 0 & 0 & 0 & 0 & 0 & 0 & 0 \\
2 & 0 & 0 & 0 & 0 & 0 & 0 & 0 & 0 & 2 & 0 & 0 & 0 & 0 & 0 & 0 \\
2 & 0 & 0 & 0 & 0 & 0 & 0 & 0 & 0 & 0 & 2 & 0 & 0 & 0 & 0 & 0 \\
1 & 1 & 1 & 1 & 0 & 1 & 0 & 1 & 1 & 0 & 0 & 1 & 0 & 0 & 0 & 0 \\
0 & 1 & 1 & 1 & 1 & 0 & 1 & 0 & 1 & 1 & 0 & 0 & 1 & 0 & 0 & 0 \\
0 & 0 & 1 & 1 & 1 & 1 & 0 & 1 & 0 & 1 & 1 & 0 & 0 & 1 & 0 & 0 \\
0 & 0 & 0 & 1 & 1 & 1 & 1 & 0 & 1 & 0 & 1 & 1 & 0 & 0 & 1 & 0 \\
1 & 1 & 1 & 1 & 1 & 1 & 1 & 1 & 1 & 1 & 1 & 1 & 1 & 1 & 1 & 1 
\end{array}\right) .
\end{equation}
Then $L(G_{\text{DP},4})$, $L(G_{\text{DP},8})$, and $L(G_{\text{DP},16})$ are the densest packings in four, eight, and sixteen dimensions, respectively. 
Similarly, For any given $G$ and $\epsilon>0$, there exists a $d(G)$ such that for any $n>\epsilon^{-n}$ and any $y \in [n^{-1/p},1-n^{-1/p}]^p$, there exists an $x \in D(G,R,n,\delta)$ such that $\|x-y\| \leq d(G) n^{-1/p}$. 
The $L(G)$ that is associated with the lowest $d(G)$ is called the thinnest covering. 
Let 
\begin{equation}\label{eqn:TC}
 G_{\text{TC},p} = \left\{ (p+1)-(p+1)^{1/2} \right\}I_p - J_p,  
\end{equation}
where $J_p$ denotes the $p\times p$ matrix with all of the entries being 1,  
then $L(G_{\text{TC},p})$ is the $p$-dimensional thinnest covering for $2\leq p\leq 22$.
See~\citet{Conway:1998} for a comprehensive review on densest packings and thinnest coverings. 
Finally, $L(G)$ is called a standard interleaved lattice if $L(2I_p) \subset L(G) \subset L(I_p)$. 
Most designs in finite $n$ that have by far the optimal separation distance are interleaved lattice-based designs~\citep{Maximin}.

We begin by giving magic rotations for integer lattices. 
When $p=2^z$ with $z \in \mathbb{N}$, magic rotations we find for $\mathbb{Z}^p$ can be expressed as 
\[ R_z(V_1,\ldots,V_z,q_1,\ldots,q_z) = R_1(V_z,q_z) \otimes .s \otimes R_1(V_1,q_1), \]
where 
\begin{equation*}\label{eqn:R2}
R_1(V_l,q_l) = V_l Q(q_l) W(V_l,q_l),  
\end{equation*} 
\begin{equation*}\label{eqn:Q}
Q(q_l) = \left(\begin{array}{cc}
1 & 1 \\
-q_l^{1/2} & q_l^{1/2} 
\end{array}\right), 
\quad
W(V_l,q_l) = \left(\begin{array}{cc}
w_{1}(V_l,q_l) & 0 \\
0 & w_{2}(V_l,q_l)
\end{array}\right), 
\end{equation*} 
\begin{equation*}\label{eqn:w1}
w_{1}(V_l,q_l) = \left\{ \left(v_{l,1,1} - v_{l,1,2} q_l^{1/2}\right)^2 + \left(v_{l,2,1} - v_{l,2,2} q_l^{1/2}\right)^2 \right\}^{-1/2}, 
\end{equation*} 
\begin{equation*}\label{eqn:w2}
w_{2}(V_l,q_l) = \left\{ \left(v_{l,1,1} + v_{l,1,2} q_l^{1/2}\right)^2 + \left(v_{l,2,1} + v_{l,2,2} q_l^{1/2}\right)^2 \right\}^{-1/2},   
\end{equation*}
and $v_{l,i,j}$ denote the $(i,j)$th entry of the $2\times 2$ matrix $V_l$. 
Clearly, $R_z(V_1,\ldots,V_z,q_1,\ldots,q_z)$ is an orthogonal matrix if $q_l>0$, $V_l$ has full rank, and 
\begin{equation}\label{eqn:Vq}
v_{l,1,1}^2 + v_{l,2,1}^2 = q_l ( v_{l,1,2}^2 + v_{l,2,2}^2 ) 
\end{equation}
for $l=1,\ldots,z$. 
Theorem~\ref{thm:power} below verifies that these $R_z(V_1,\ldots,V_z,q_1,\ldots,q_z)$ are magic rotations for $\mathbb{Z}^p$. 
We remark that tensor product types of rotations have been employed to construct orthogonal Latin hypercube designs~\citep{Steinberg:Lin:2006}. 

\begin{theorem}\label{thm:power}
Assume that $z\in \mathbb{N}$, $v_{l,i,j} \in \mathbb{N}$, $q_l \in \mathbb{N}$, $V_l$ has full rank, and the elements of $V_1, \ldots, V_z$ satisfy (\ref{eqn:Vq}). 
Also assume that $\prod_{l=1}^z q_l^{i_l/2} $ is irrational for any $(i_1,\ldots,i_z) \in \{0,1\}^z$ and $(i_1,\ldots,i_z) \neq 0$. 
Then, designs generated from $L\{R_z(V_1,\ldots,V_z,q_1,\ldots,q_z)\}$ are quasi-Latin hypercube designs in $2^z$ dimensions that possess quasi-optimal separation distance on all of the projections. 
\end{theorem}

While proofs are deferred to the appendix, 
we write one important tool to verify the quasi-Latin hypercube property of lattice-based designs that is repeatedly used in proofs, as follows. 

\begin{proposition} \label{prp:minimax}
Suppose $L$ is a lattice and $L$-based designs possess quasi-optimal $\rho_{\{k\}}(D)$. 
Then, $L$-based designs also possess quasi-optimal $\eta_k(D)$. 
\end{proposition}

We use Minkowski's first theorem~\citep{Siegel:1989} below, a fundamental result in geometry of numbers, to prove Proposition~\ref{prp:minimax}.

\begin{lemma} [Minkowski's first theorem] 
Any convex set in $\mathbb{R}^p$ that is symmetric with respect to the origin and with volume greater than $|\det(G)|2^p$ contains a non-zero lattice point of $L(G)$.
\end{lemma}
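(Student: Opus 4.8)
The plan is to prove Minkowski's first theorem by the classical two-step route: first establish Blichfeldt's volume-pigeonhole principle, then exploit convexity and central symmetry to extract a nonzero lattice point. Throughout write $C$ for the given convex, origin-symmetric set and $v = |\det(G)|$ for the covolume of $L(G)$.

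First I would prove Blichfeldt's theorem: any measurable set $S \subset \mathbb{R}^p$ with volume exceeding $v$ contains two distinct points whose difference lies in $L(G)$. Fix a fundamental parallelepiped $P = \{t^T G : t \in [0,1)^p\}$, which has volume $v$ and tiles $\mathbb{R}^p$ under translation by $L(G)$. For each lattice point $\ell \in L(G)$, set $S_\ell = \{ x - \ell : x \in S \cap (P + \ell) \} \subseteq P$. The sets $S \cap (P+\ell)$ partition $S$, so $\sum_\ell \text{vol}(S_\ell) = \text{vol}(S) > v = \text{vol}(P)$. Since every $S_\ell$ sits inside $P$ while their measures sum to more than $\text{vol}(P)$, the $S_\ell$ cannot be pairwise almost-disjoint; hence there exist distinct $\ell_1 \neq \ell_2$ with $S_{\ell_1} \cap S_{\ell_2}$ of positive measure, in particular nonempty. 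Choosing a common point $z$ yields $x_1 = z + \ell_1 \in S$ and $x_2 = z + \ell_2 \in S$ with $x_1 - x_2 = \ell_1 - \ell_2 \in L(G) \setminus \{0\}$.

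Next I would apply this to the scaled set $S = \tfrac12 C$. Scaling by $1/2$ multiplies volume by $2^{-p}$, so $\text{vol}(\tfrac12 C) = 2^{-p}\,\text{vol}(C) > 2^{-p}\cdot 2^p v = v$, and Blichfeldt produces distinct $x, y \in \tfrac12 C$ with $x - y \in L(G)\setminus\{0\}$. Then $2x, 2y \in C$; by central symmetry $-2y \in C$; and by convexity the midpoint of $2x$ and $-2y$, namely $x - y$, also lies in $C$. This $x - y$ is the desired nonzero lattice point.

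The main obstacle is the measure-theoretic pigeonhole step in Blichfeldt's theorem: making rigorous the claim that pieces of total measure exceeding $\text{vol}(P)$, all contained in $P$, must overlap on a set of positive measure. This requires countable additivity of Lebesgue measure over the lattice translates together with the observation that if the $S_\ell$ were pairwise null-overlapping then $\sum_\ell \text{vol}(S_\ell) = \text{vol}(\bigcup_\ell S_\ell) \le \text{vol}(P)$, contradicting the strict inequality. A secondary point is the strictness of the volume hypothesis, which is exactly what forces a nonempty (not merely boundary) intersection and hence a genuine nonzero lattice point.
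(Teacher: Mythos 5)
Your proof is correct: the Blichfeldt pigeonhole step is stated and justified properly (countable additivity over the lattice translates of the fundamental parallelepiped, with the strict volume inequality forcing a positive-measure overlap), and the passage from two points of $\tfrac12 C$ to a nonzero lattice point of $C$ via central symmetry and convexity is the standard and complete argument. Note, however, that the paper does not prove this lemma at all --- it quotes Minkowski's first theorem as a known classical result with a citation to Siegel (1989) and uses it as a tool in the proof of Proposition~2.2 --- so there is no in-paper proof to compare against; what you have written is the canonical textbook proof and would serve as a self-contained justification if one were wanted.
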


Proposition~\ref{prp:minimax} states that all lattice-based designs that possess quasi-optimal separation distance on univariate projections also possess quasi-optimal fill distance on univariate projections and thus are quasi-Latin hypercube designs. 
From Theorem~\ref{thm:power}, we can generate quasi-Latin hypercube designs that possess quasi-optimal separation distance on all of the projections for arbitrary $p$ and $n$. 
Even if $p$ is not a power of 2, we can always find a $z\in\mathbb{N}$ such that $2^{z-1}<p\leq 2^z$, generate an $L\{R_z(V_1,\ldots,V_z,q_1,\ldots,q_z)\}$-based design, and project the design onto the first $p$ dimensions. 
Nevertheless, because $\mathbb{Z}^p$-based designs are sub-optimal in unprojected separation distance, it is more interesting to obtain magic rotations for densest packings, thinnest coverings, and interleaved lattices. 

The $L(G)$ is a subset of $L(H)$ if and only if there exists a $p\times p$ integer matrix $K$ such that $G=KH$ and $|\det(K)|\geq 1$.
In this case, we call $L(G)$ a sublattice of $L(H)$. 
Proposition~\ref{prp:extension} below gives the projection property of sublattices. 

\begin{proposition}\label{prp:extension} 
Suppose designs generated from $L(GR)$ possess quasi-optimal $\rho_\gamma(D)$ and $L(H)$ is a sublattice of $L(G)$.  
Then, designs generated from $L(HR)$ possess quasi-optimal $\rho_\gamma(D)$ as well.  
\end{proposition}

Utilizing Proposition~\ref{prp:extension}, we extend the results in Theorem~\ref{thm:power} to some sublattices of $\mathbb{Z}^p$. 

\begin{corollary} \label{crl:power}
(i) Assume that $v_{l,i,j} \in \mathbb{N}$, $q_l \in \mathbb{N}$, $V_l$ has full rank, and the elements of $V_1$, $V_2$, $V_3$, and $V_4$ satisfy (\ref{eqn:Vq}). 
Also assume that $q_1^{i_1/2}q_2^{i_2/2}q_3^{i_3/2}q_4^{i_4/2}$ is irrational for any $(i_1,i_2,i_3,i_4) \in \{0,1\}^4$ and $(i_1,i_2,i_3,i_4) \neq 0$. 
Then, designs generated from $L\{G_{\text{DP},4} R_2(V_1,V_2,q_1,q_2)\}$ in (\ref{eqn:DP24}), $L\{G_{\text{DP},8} R_3(V_1,V_2,V_3,q_1,q_2,q_3)\}$ in (\ref{eqn:DP24}),  $L\{G_{\text{DP},16} R_4(V_1,V_2,V_3,V_4,q_1,q_2,q_3,q_4)\}$ in (\ref{eqn:DP816}), 
or $L\{G_{\text{TC},8} R_3(V_1,V_2,V_3,q_1,q_2,q_3)\}$ in (\ref{eqn:TC}) are quasi-Latin hypercube designs 
that possess quasi-optimal separation distance on all of the projections. 
(ii) Assume $z\in \mathbb{N}$, $v_{l,i,j} \in \mathbb{N}$, $q_l \in \mathbb{N}$, $V_l$ has full rank, and the elements of $V_1, \ldots, V_z$ satisfy (\ref{eqn:Vq}). 
Also assume that $\prod_{l=1}^z q_l^{i_l/2} $ is irrational for any $(i_1,\ldots,i_z) \in \{0,1\}^z$ and $(i_1,\ldots,i_z) \neq 0$ and $L(G)$ is a standard interleaved lattice in $2^z$ dimensions. 
Then, designs generated from $L\{GR_z(V_1,\ldots,V_z,q_1,\ldots,q_z)\}$ are quasi-Latin hypercube designs that possess quasi-optimal separation distance on all of the projections. 
\end{corollary}

For illustration, when $z=2$, $q_1=2$, $q_2=5$, 
\[ V_1 = \left( \begin{array}{rr} 3 & 1 \\ 11 & 8 \end{array} \right), 
\text{ and } V_2 = \left( \begin{array}{rr} 10 & 1 \\ 15 & 8 \end{array} \right), \]
simple calculation yields 
\[ v_{1,1,1}^2 + v_{1,2,1}^2 = q_1 ( v_{1,1,2}^2 + v_{1,2,2}^2 ) = 130, \]
\[ v_{2,1,1}^2 + v_{2,2,1}^2 = q_2 ( v_{2,1,2}^2 + v_{2,2,2}^2 ) = 325, \]
\[ R_1(V_1,q_1) = \left( \begin{array}{cc} 3 - 2^{1/2} & 3 + 2^{1/2} \\ 11 - 8 \times 2^{1/2} & 11 + 8 \times 2^{1/2} \end{array} \right) \left( \begin{array}{cc} 0.619 & 0 \\ 0 & 0.044 \end{array} \right) 
 = \left( \begin{array}{rr}  0.981 &  0.194 \\
-0.194 &  0.981 \end{array} \right),\]
\[ R_1(V_2,q_2) = \left( \begin{array}{cc} 10 - 5^{1/2} & 10 + 5^{1/2} \\ 15 - 8 \times 5^{1/2} & 15 + 8 \times 5^{1/2} \end{array} \right) \left( \begin{array}{cc} 0.121 & 0 \\ 0 & 0.028 \end{array} \right) 
 = \left( \begin{array}{rr}  0.937 &  0.349 \\
-0.349 &  0.937
\end{array} \right),\]
and  
\[ R_2(V_1,V_2,q_1,q_2) 
= \left( \begin{array}{rrrr} 
 0.919 &  0.182 &  0.342 &  0.068 \\
-0.182 &  0.919 & -0.068 &  0.342 \\
-0.342 & -0.068 &  0.919 &  0.182 \\
 0.068 & -0.342 & -0.182 &  0.919 
\end{array} \right),
\] 
which is a magic rotation for both $\mathbb{Z}^4$ and $L(G_{\text{DP},4})$. 
Together with $G=G_{\text{DP},4}$ in (\ref{eqn:DP24}), $|\det(G)|=2$, $n=40$, and $\delta = (2.160, 1.505, 1.198, 0.820)^T$, 
we calculate the final design,  
whose design matrix and corresponding $a$ vectors are reported in appendix. 

Not all of the lattices are sublattices of an integer lattice. 
Theorem~\ref{thm:4} gives magic rotations for $L(G_{\text{TC},4})$, which cannot be expressed as a sublattice of $\mathbb{Z}^4$. 
Let 
\[ \bar V_1 = \left(\begin{array}{cc}
5v_{1,1,2} & v_{1,1,1} \\
5v_{1,2,2} & v_{1,2,1}
\end{array}\right) \] 
and $\bar B_4 = 5 V_2 \otimes V_1 - V_2 \otimes \bar V_1 - (J_2 V_2) \otimes (J_2 V_1)$. 

\begin{theorem}\label{thm:4}
Assume that $v_{l,i,j} \in \mathbb{N}$ for $i,j=1,2$ and $l=1,2$, $q_1=5$, $q_2 \in \mathbb{N}$, $q_2^{1/2}$ and $(5q_2)^{1/2}$ are irrational, $V_1$, $V_2$, and $\bar B_4$ have full rank, and 
the elements of $V_1$ and $V_2$ satisfy (\ref{eqn:Vq}). 
Then, designs generated from $L\{G_{\text{TC},4} R_2(V_1,V_2,q_1,q_2)\}$ are quasi-Latin hypercube designs that possess quasi-optimal separation distance on all of the projections.   
\end{theorem}

Our technique may be used to obtain magic rotations for other lattices but separate derivations are needed. 
We remark that all of the obtained magic rotations are for lattices in power of two dimensions.

\section{Densest packing-based maximum projection designs}
\label{sec:alg}

To corroborate the usefulness of the proposed magic rotations, 
in this section, we give an algorithm to generate densest packing-based maximum projection designs, 
which can be seen as a subclass of rotated sphere packing designs using $G=G_{\text{DP},p}$ in (\ref{eqn:DP24}) and (\ref{eqn:DP816}) and magic rotations rather than random rotations. 

First, we consider the case with $p=4$. 
Recall that we have found infinitely many magic rotations for the four-dimensional densest packing. 
From numerical results not shown here, it seems that lower $q_l$ tends to yield slightly better designs. 
Therefore, we use $q_1=2$ and $q_2=5$, which are the lowest integers that can satisfy (\ref{eqn:Vq}). 
We numerically find the 10 choices of $V_1$ and $V_2$ that have the lowest $\max_{i,j} v_{1,i,j}$ and $\max_{i,j} v_{2,i,j}$, respectively, while satisfying (\ref{eqn:Vq}). 
These choices are listed in Table~\ref{tab:V1V2}. 
The 100 magic rotations are obtained by crossing the choices of $V_1$ and $V_2$. 
We call the generated designs densest packing-based maximum projection designs. 

\begin{table}
\begin{center}
\caption{Choices of $V_1$, $q_1$, $V_2$, and $q_2$ for magic rotations in $p=4$}
\begin{tabular}{ccccc}
$v_{1,1,1}$ & $v_{1,1,2}$ & $v_{1,2,1}$ & $v_{1,2,2}$ & $q_1$ \\
1 & 1 & 3 & 2 & 2 \\
3 & 1 & 11 & 8 & 2 \\
7 & 6 & 11 & 7 & 2 \\
1 & 1 & 17 & 12 & 2 \\
11 & 6 & 17 & 13 & 2 \\
9 & 8 & 17 & 11 & 2 \\
4 & 1 & 18 & 13 & 2 \\
7 & 3 & 19 & 14 & 2 \\
3 & 4 & 19 & 13 & 2 \\
13 & 11 & 19 & 12 & 2 
\end{tabular}
\quad
\begin{tabular}{ccccc}
$v_{2,1,1}$ & $v_{2,1,2}$ & $v_{2,2,1}$ & $v_{2,2,2}$ & $q_2$ \\
1 & 1 & 3 & 1 & 5 \\
1 & 1 & 7 & 3 & 5 \\
6 & 1 & 7 & 4 & 5 \\
7 & 1 & 9 & 5 & 5 \\
8 & 1 & 11 & 6 & 5 \\
11 & 2 & 12 & 7 & 5 \\
10 & 1 & 15 & 8 & 5 \\
13 & 2 & 16 & 9 & 5 \\
11 & 1 & 17 & 9 & 5 \\
16 & 3 & 17 & 10 & 5 \\
\end{tabular}\label{tab:V1V2}
\end{center}
\end{table}

We compare the proposed designs with 
maximin distance Latin hypercube designs that are generated from the R package SLHD, 
maximum projection designs that are generated from the R package MaxPro, 
scrambled nets that are generated from the R package DiceDesign, 
and densest packing-based designs that are generated from 100 random rotations. 
For the last type of design, the algorithm as in generating original rotated sphere packing designs is used, except that the four-dimensional thinnest covering is replaced by the four-dimensional densest packing. 
These densest packing-based designs have comparable $\rho_\gamma(D)$ for $\text{card}(\gamma)<p$ and better $\rho(D)$ than original rotated sphere packing designs. 
Figure~\ref{fig:p4} displays the maximum projective separation distance, $\max_{\text{card}(\gamma)=r} \rho_{\gamma}(D)$, for $r=1,2,3,4$. 
We use the logarithmic scale on both $n$ and maximum projective separation distance so that the relationships are close to linear. 

\begin{figure}
\begin{center}
\includegraphics[width=7cm]{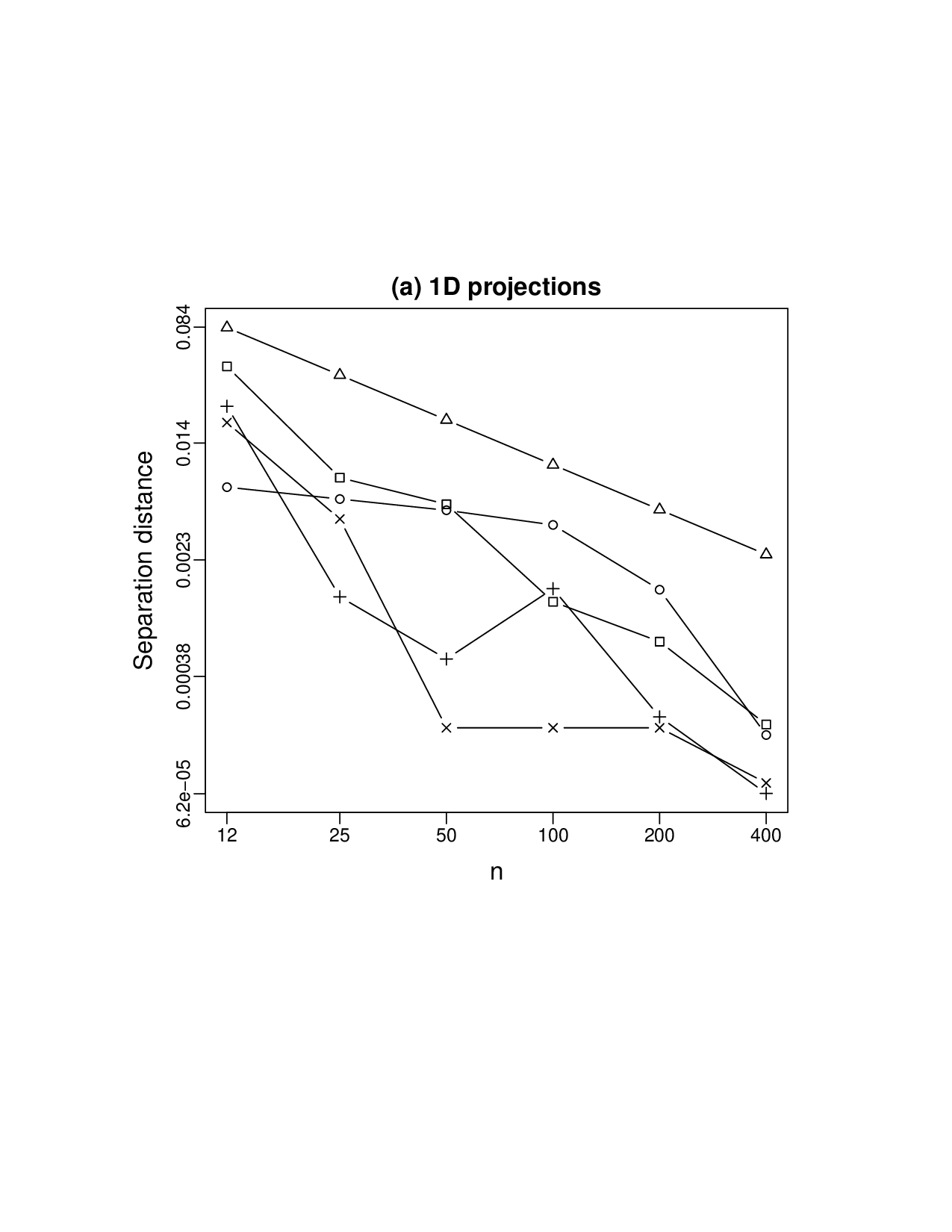}
\hspace{.2cm}
\includegraphics[width=7cm]{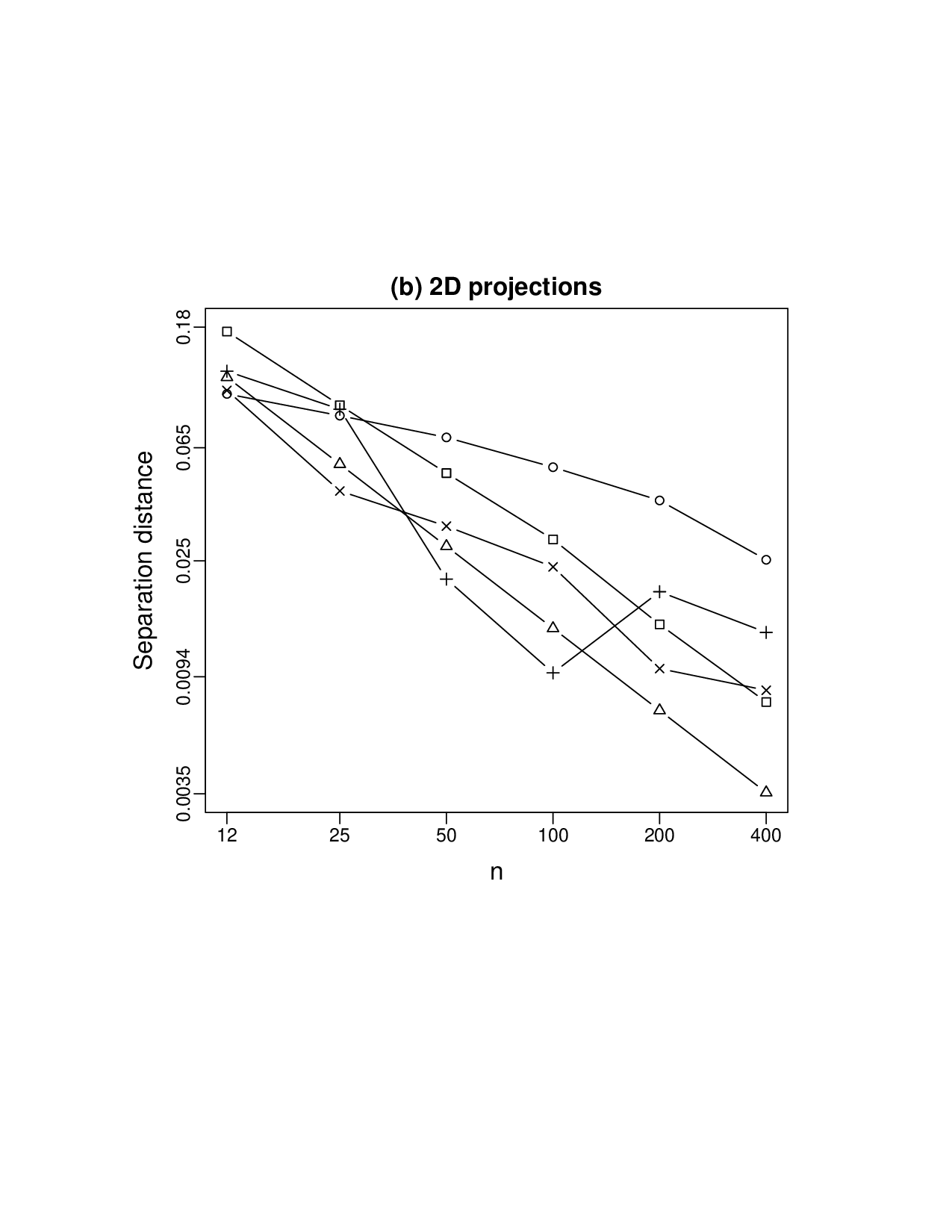}

\includegraphics[width=7cm]{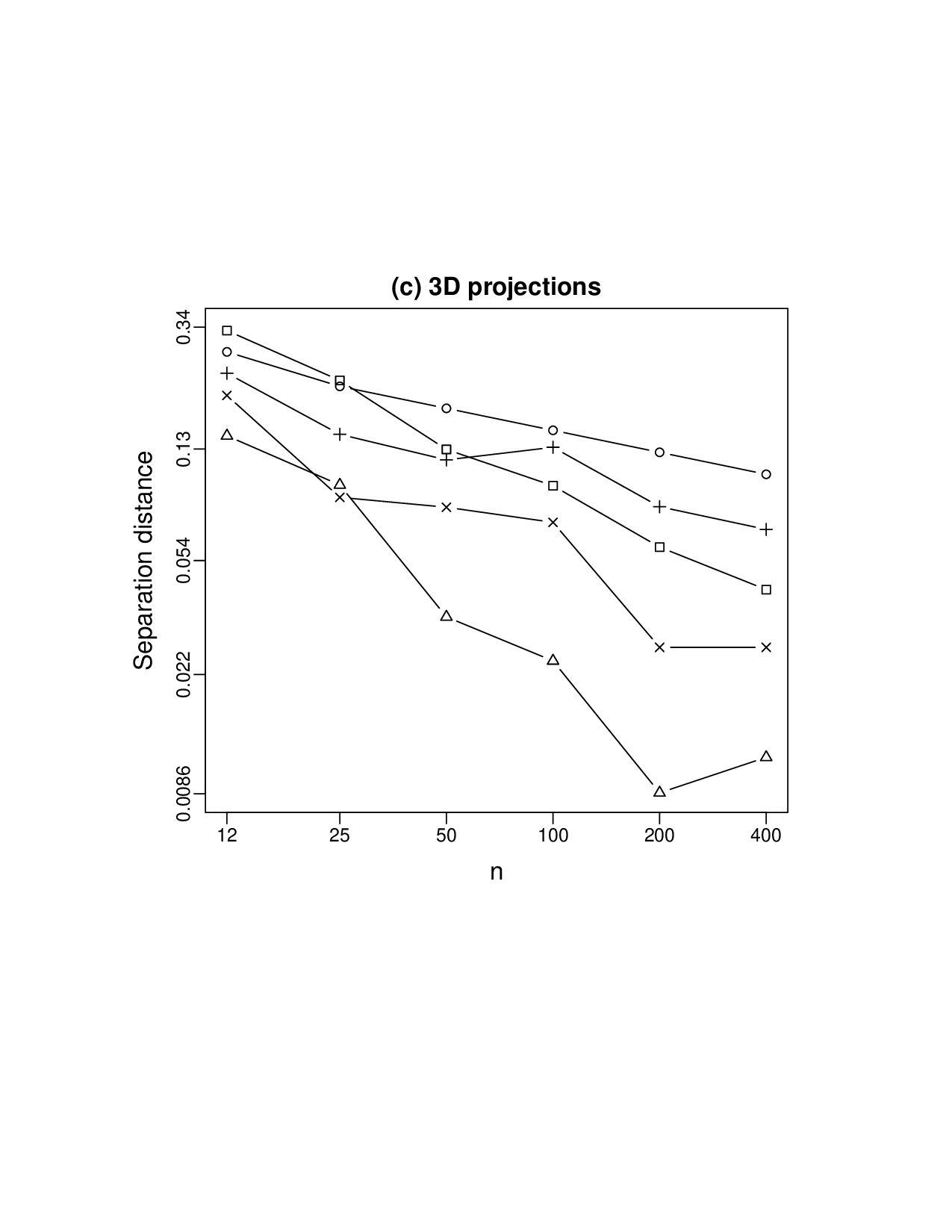}
\hspace{.2cm}
\includegraphics[width=7cm]{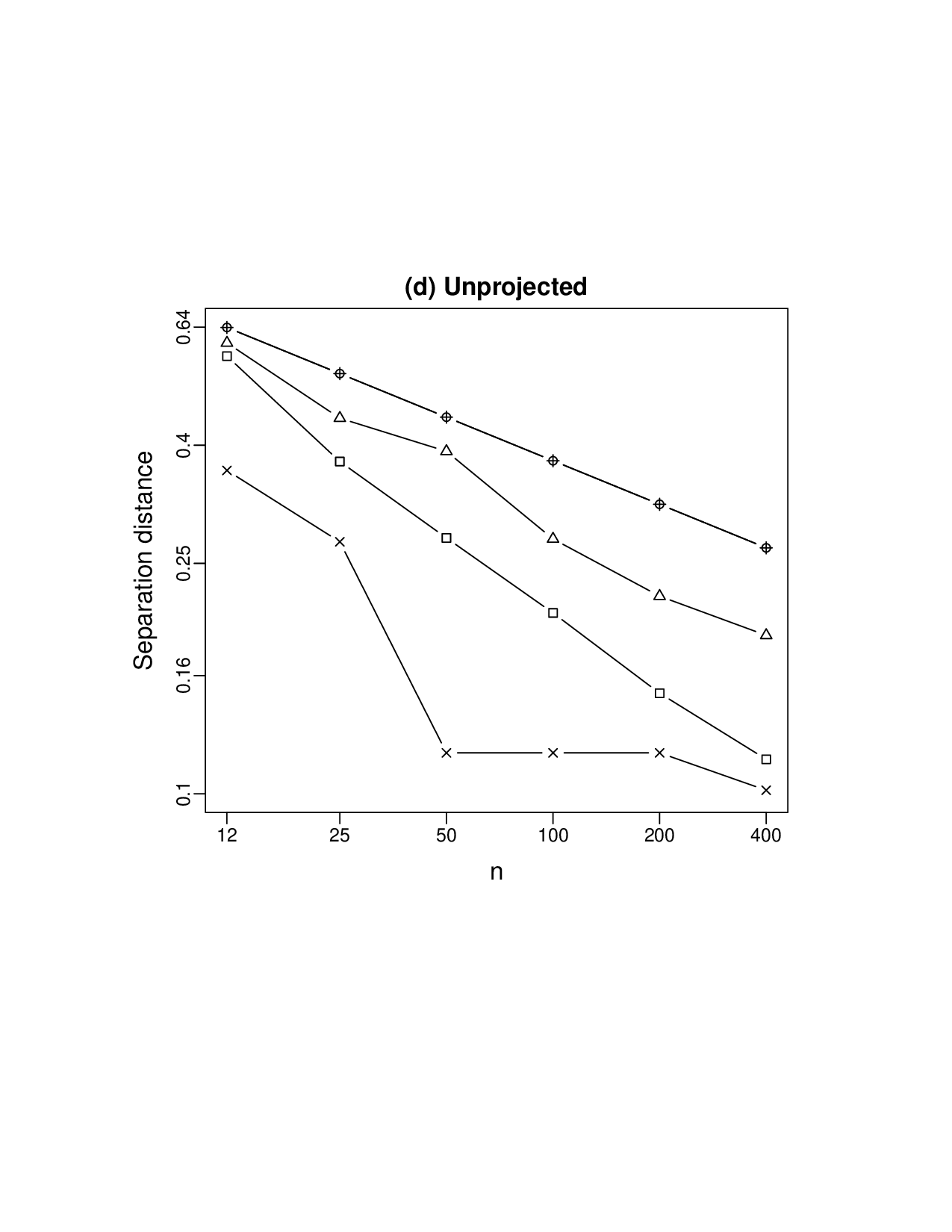}
\caption{Maximum projective separation distance in (a) one-, (b) two-, (c) three-, and (d) four-dimensional projections  
for densest packing-based maximum projection designs (circle), 
designs generated from random rotations (plus), 
maximin distance Latin hypercube designs (triangle), 
maximum projection designs (square), 
and scrambled nets (cross) 
in four dimensions. 
}
\label{fig:p4}
\end{center}
\end{figure}

From the results, four-dimensional densest packing-based maximum projection designs are remarkably better than other types of designs for moderate-to-large $n$. 
They are the best for $\max_{\text{card}(\gamma)=2} \rho_{\gamma}(D)$, $\max_{\text{card}(\gamma)=3} \rho_{\gamma}(D)$, and $\rho(D)$ when $n\geq 25$. 
For $\max_{\text{card}(\gamma)=1} \rho_{\gamma}(D)$, they are reasonably good, although not as good as maximin distance Latin hypercube designs which are optimal in univariate margins.
In our opinion, $\max_{\text{card}(\gamma)=1} \rho_{\gamma}(D)$ is important only if there is only one active variable. 
However, under this scenario any reasonably good design should be enough because modeling a single-variable function is a relatively simple task. 
Compared to densest packing-based maximum projection designs, maximin distance Latin hypercube designs are considerably worse in $\max_{\text{card}(\gamma)=2} \rho_{\gamma}(D)$ and 
$\max_{\text{card}(\gamma)=3} \rho_{\gamma}(D)$, 
maximum projection designs are considerably worse in $\rho(D)$, 
scrambled nets are considerably worse in all measures, 
and densest packing-based designs that are generated from random rotations are uniformly not better in any sense. 
We remark that from comparisons not shown here, interleaved lattice-based maximin distance designs~\citep{Maximin} have better $\rho(D)$ than densest packing-based maximum projection designs. 
However, their projective separation distances are poor. 

For $p=8$, we also recommend trying 100 magic rotations and then selecting the empirically best design by $\psi(D)$ in (\ref{eqn:MPC}). 
Since the third-lowest $q_l \in \mathbb{N}$ that may satisfy (\ref{eqn:Vq}) is 13, we use $q_1=2$, $q_2=5$, and $q_3=13$. 
We numerically find the 10 choices of $V_1$, $V_2$, and $V_3$ that have the lowest $\max_{i,j} v_{1,i,j}$, $\max_{i,j} v_{2,i,j}$, and $\max_{i,j} v_{3,i,j}$, respectively, while satisfying (\ref{eqn:Vq}). 
Crossing these choices yields 1000 magic rotations and we recommend trying 100 randomly selected rotations among them. 

\begin{figure}
\begin{center}
\includegraphics[width=7cm]{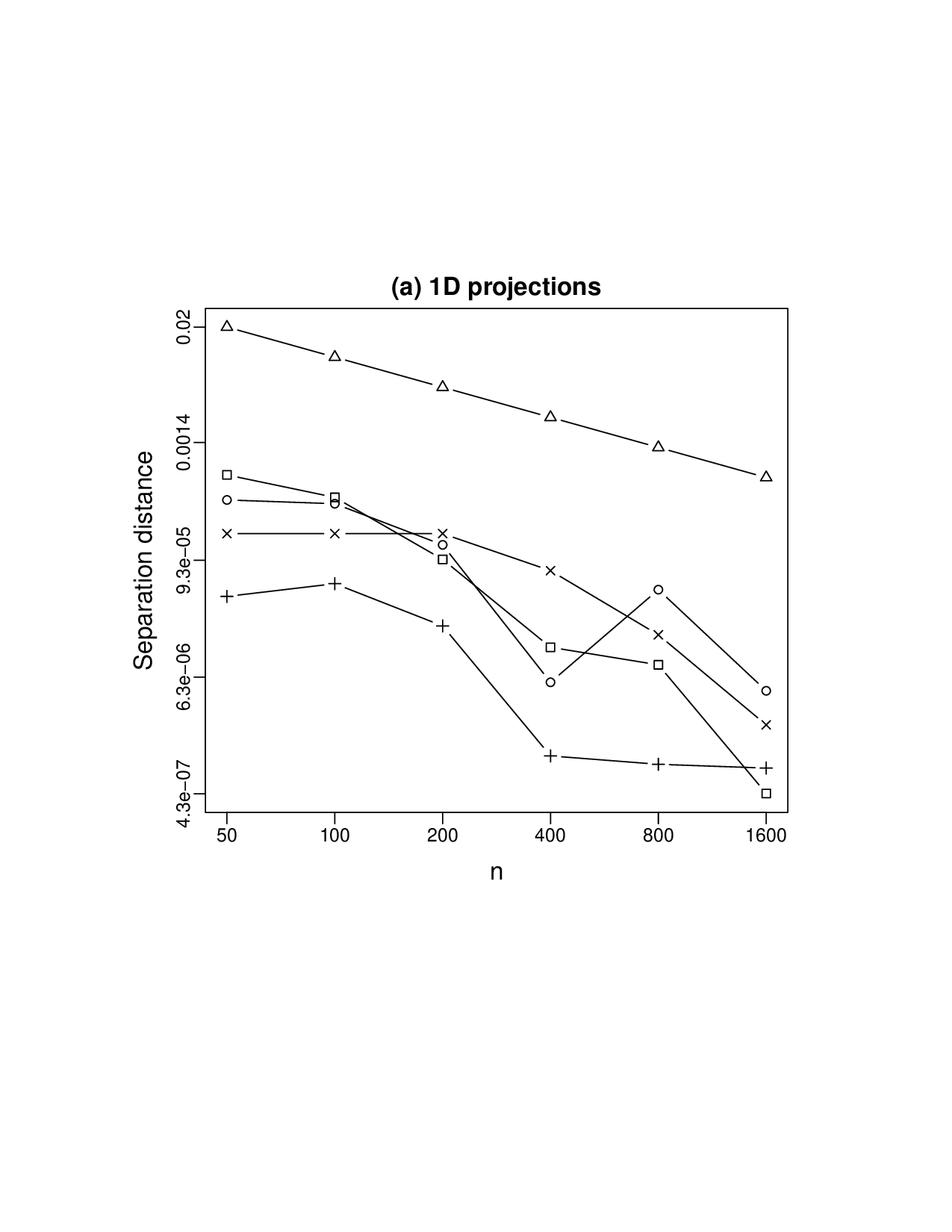}
\hspace{.2cm}
\includegraphics[width=7cm]{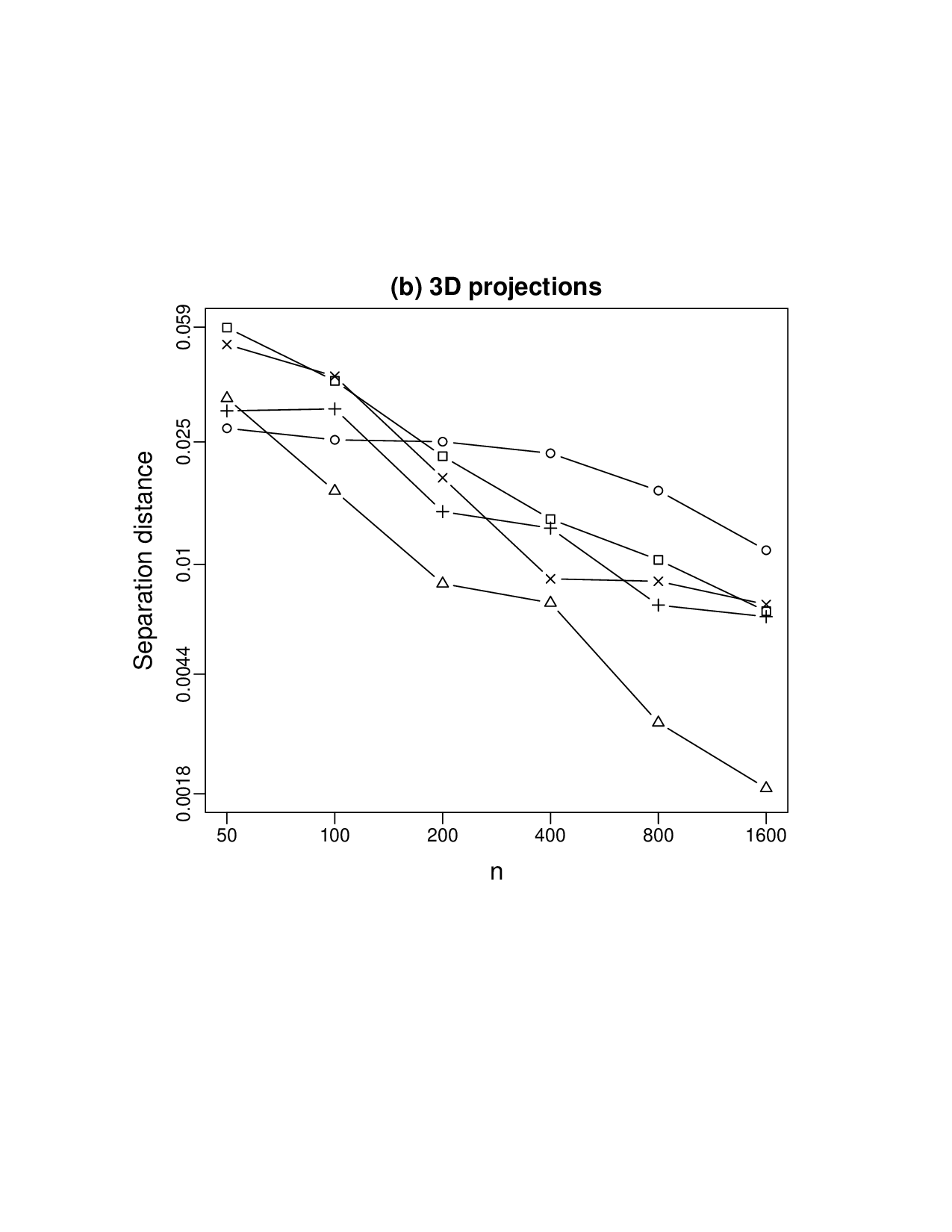}

\includegraphics[width=7cm]{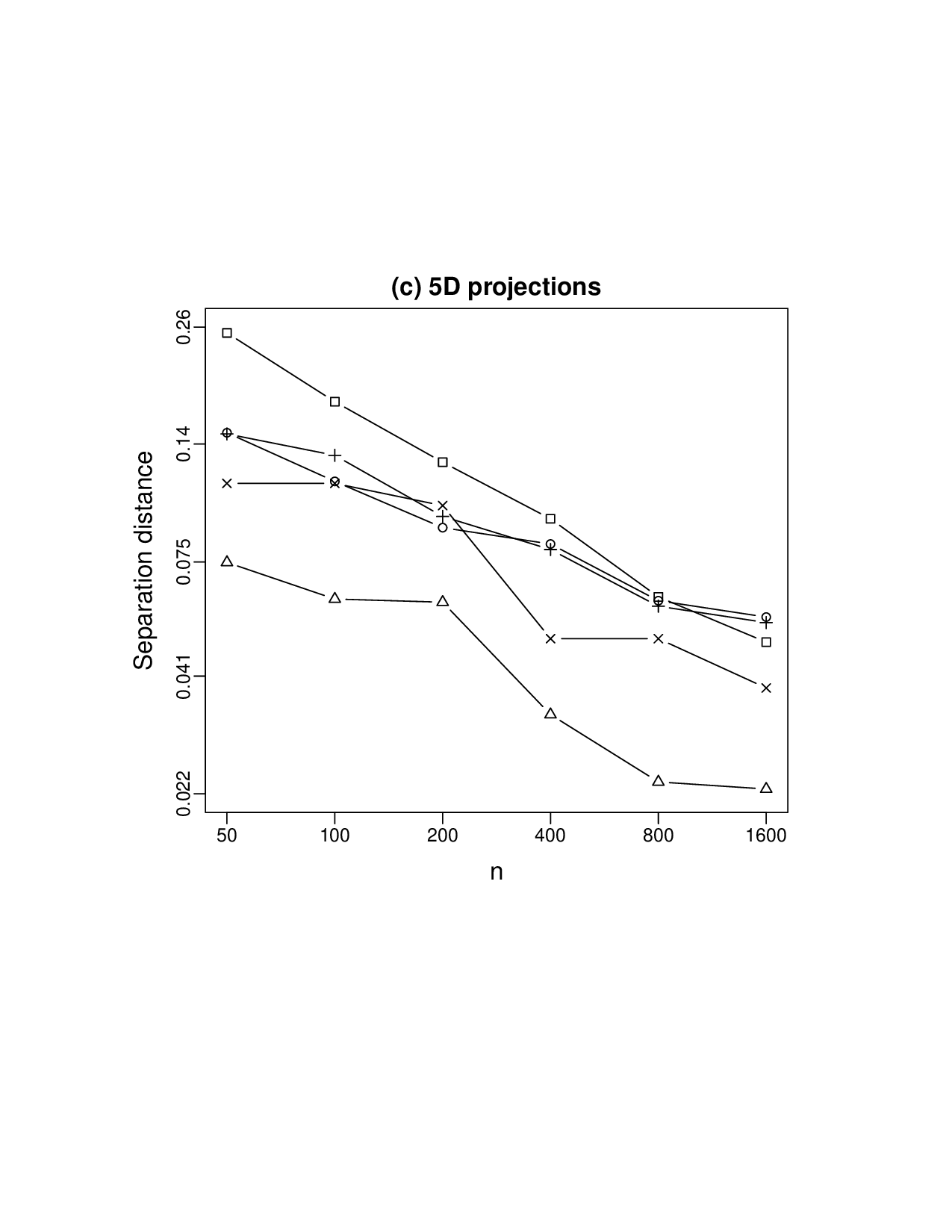}
\hspace{.2cm}
\includegraphics[width=7cm]{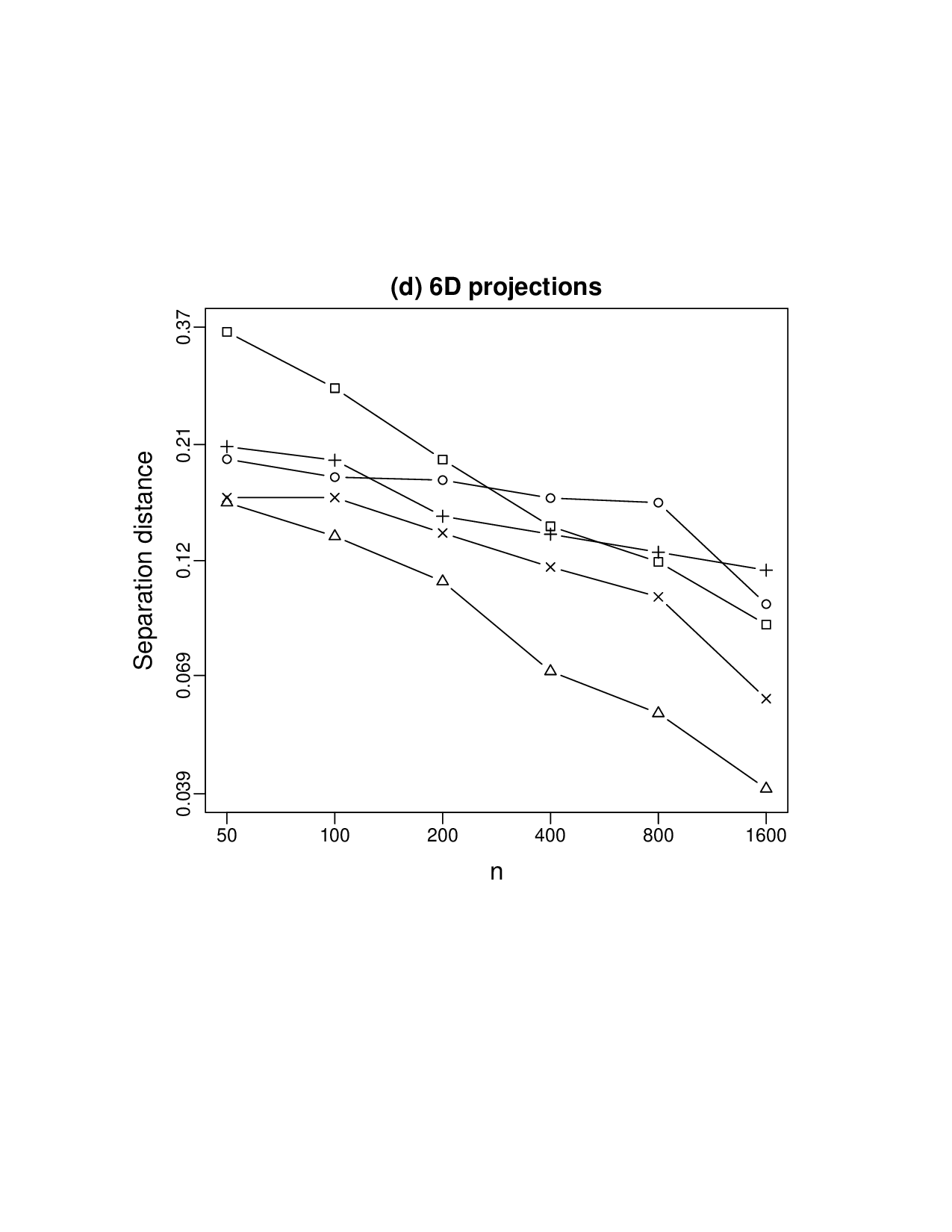}

\includegraphics[width=7cm]{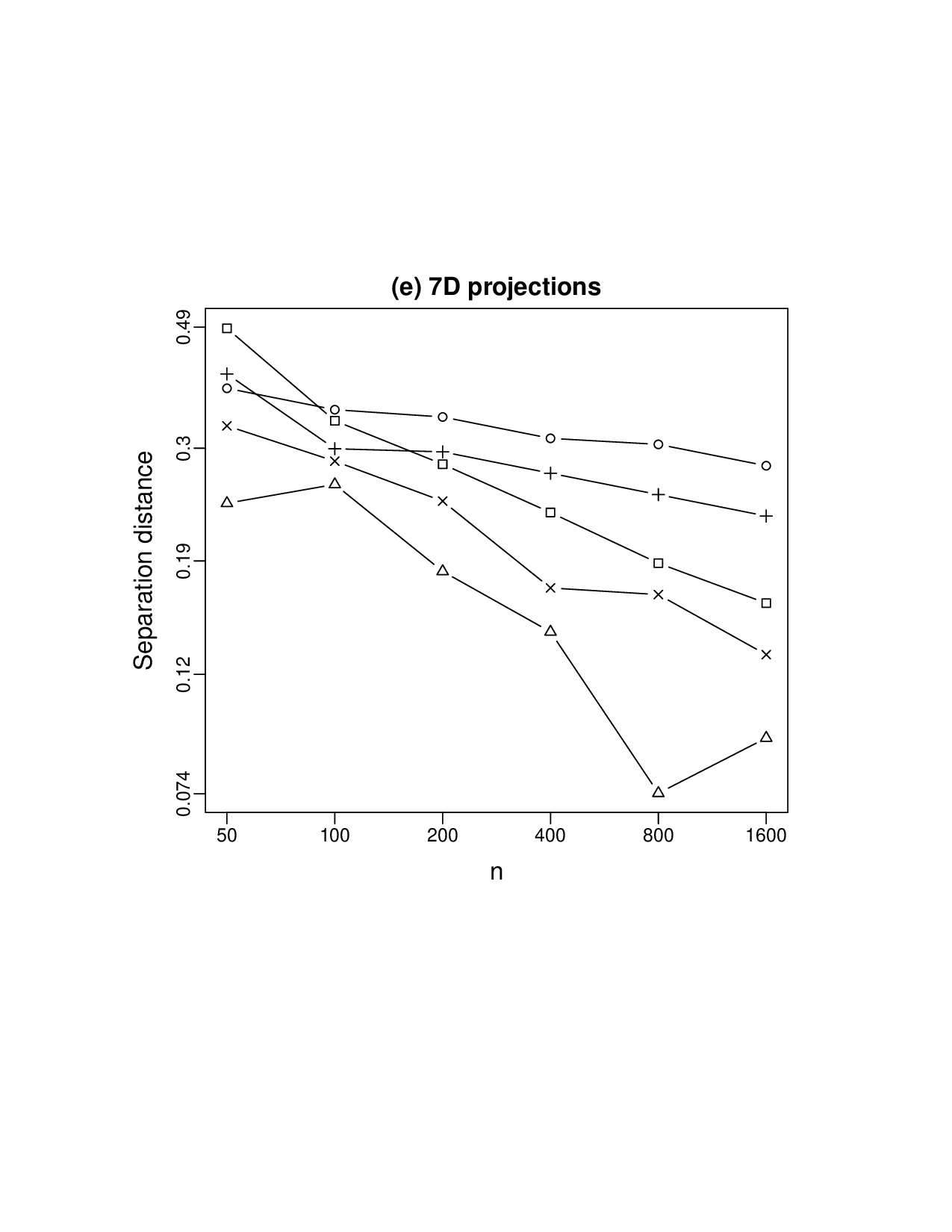}
\hspace{.2cm}
\includegraphics[width=7cm]{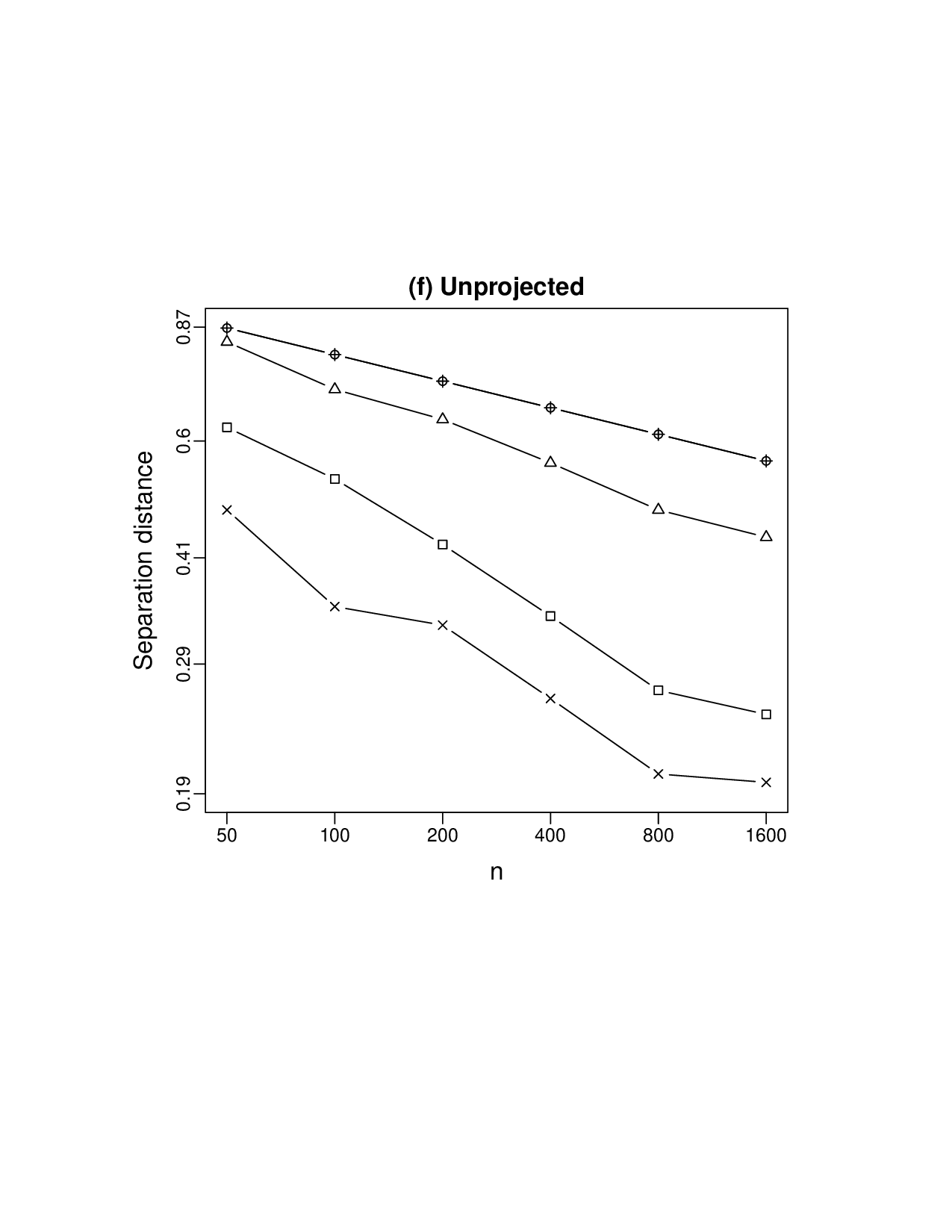}
\caption{Maximum projective separation distance in (a) one-, (b) three-, (c) five-, (d) six-, (e) seven-, and (f) eight-dimensional projections 
for densest packing-based maximum projection designs (circle), 
designs generated from random rotations (plus), 
maximin distance Latin hypercube designs (triangle), 
maximum projection designs (square), 
and scrambled nets (cross) 
in eight dimensions. 
}
\label{fig:p8a}
\end{center}
\end{figure}

Similar to the $p=4$ case, we numerically compare designs in eight dimensions and Fig.~\ref{fig:p8a} displays the results. 
Owing to space limitations, we omit the results for $\max_{\text{card}(\gamma)=2} \rho_{\gamma}(D)$ and $\max_{\text{card}(\gamma)=4} \rho_{\gamma}(D)$, which are similar to that for $\max_{\text{card}(\gamma)=3} \rho_{\gamma}(D)$. 
Although densest packing-based maximum projection designs in $p=8$ are not as striking as those in $p=4$, 
they are still uniformly better than other types of designs in $\max_{\text{card}(\gamma)=r} \rho_{\gamma}(D)$ for $r>1$ with large $n$. 
For moderate $n$, they are also excellent in overall performance: 
they are the best for $\rho(D)$, which is considerably better than maximum projection designs and scrambled nets;  
they are also the best for $\max_{\text{card}(\gamma)=7} \rho_{\gamma}(D)$, which is considerably better than maximin distance Latin hypercube designs and scrambled nets;  
they are reasonably good for $\max_{\text{card}(\gamma)=3} \rho_{\gamma}(D)$, $\max_{\text{card}(\gamma)=5} \rho_{\gamma}(D)$, and $\max_{\text{card}(\gamma)=6} \rho_{\gamma}(D)$, which are substantially better than maximin distance Latin hypercube designs;  
and, finally, they are uniformly no worse than densest packing-based designs that are generated from random rotations. 
In sum, they strike a new balance between projective separation distance and unprojected separation distance that is more favorable to $\rho(D)$ and $\max_{\text{card}(\gamma)=r} \rho_{\gamma}(D)$ for high $r$ 
and should be useful for computer experiments that likely contain six or more active variables. 

\begin{figure}
\begin{center}
\includegraphics[width=7cm]{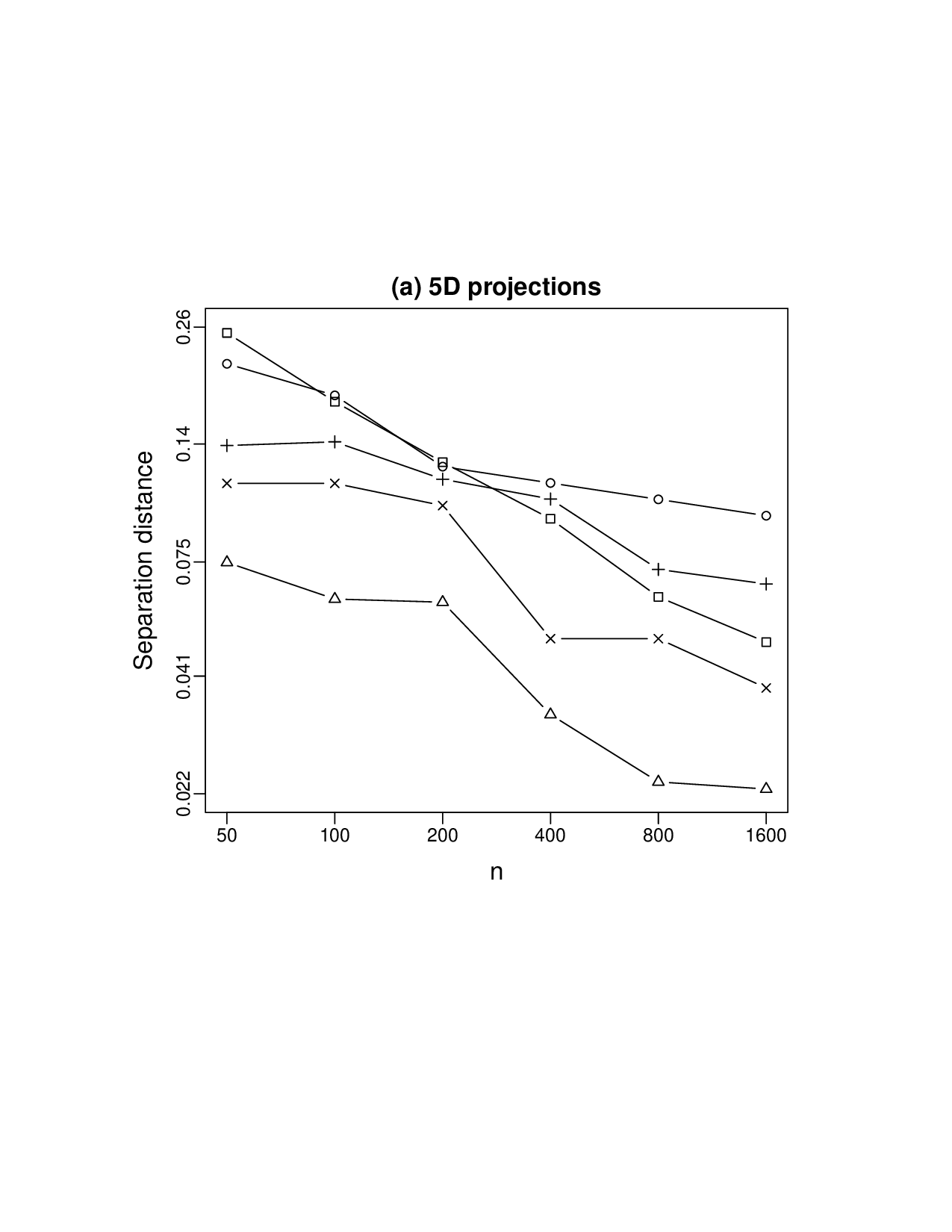}
\hspace{.2cm}
\includegraphics[width=7cm]{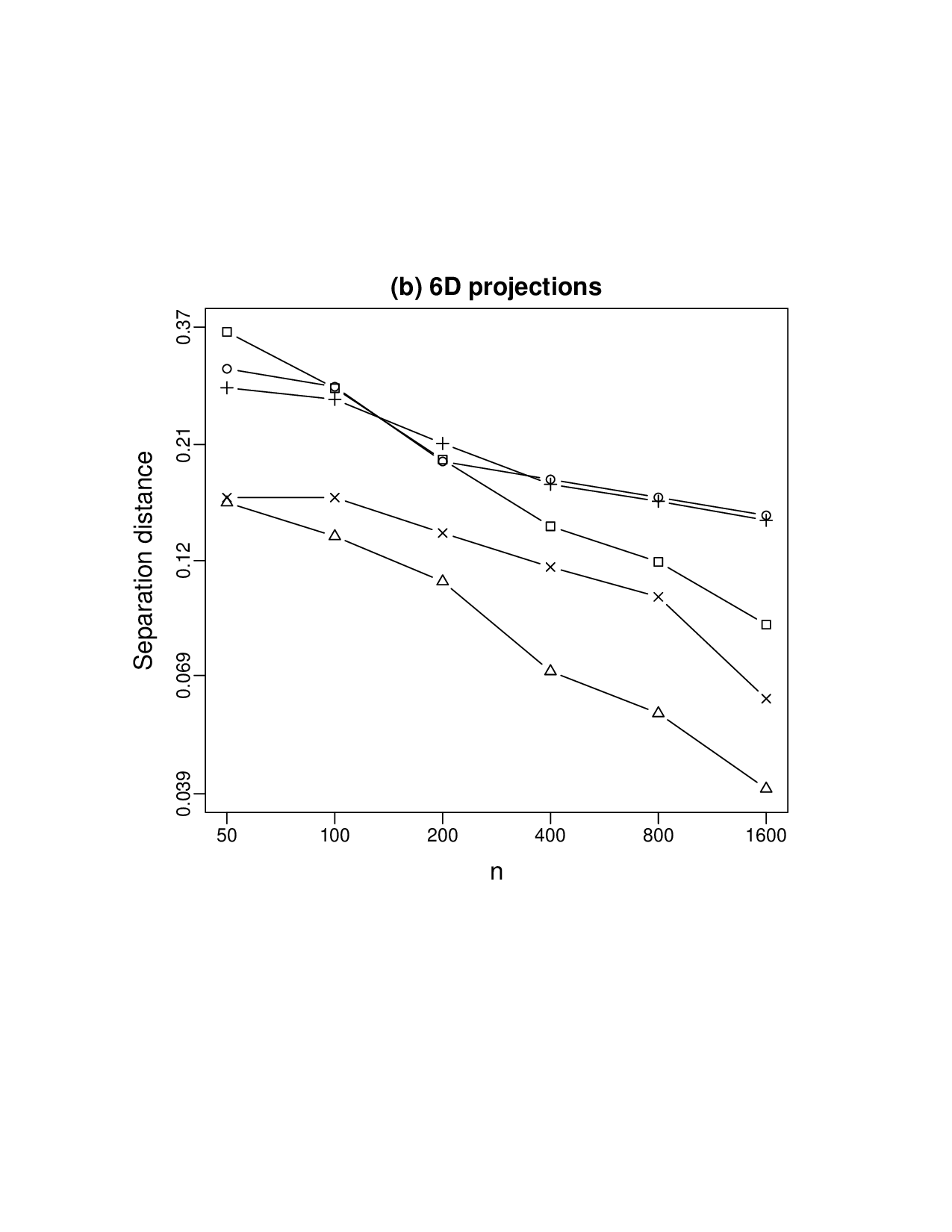}

\includegraphics[width=7cm]{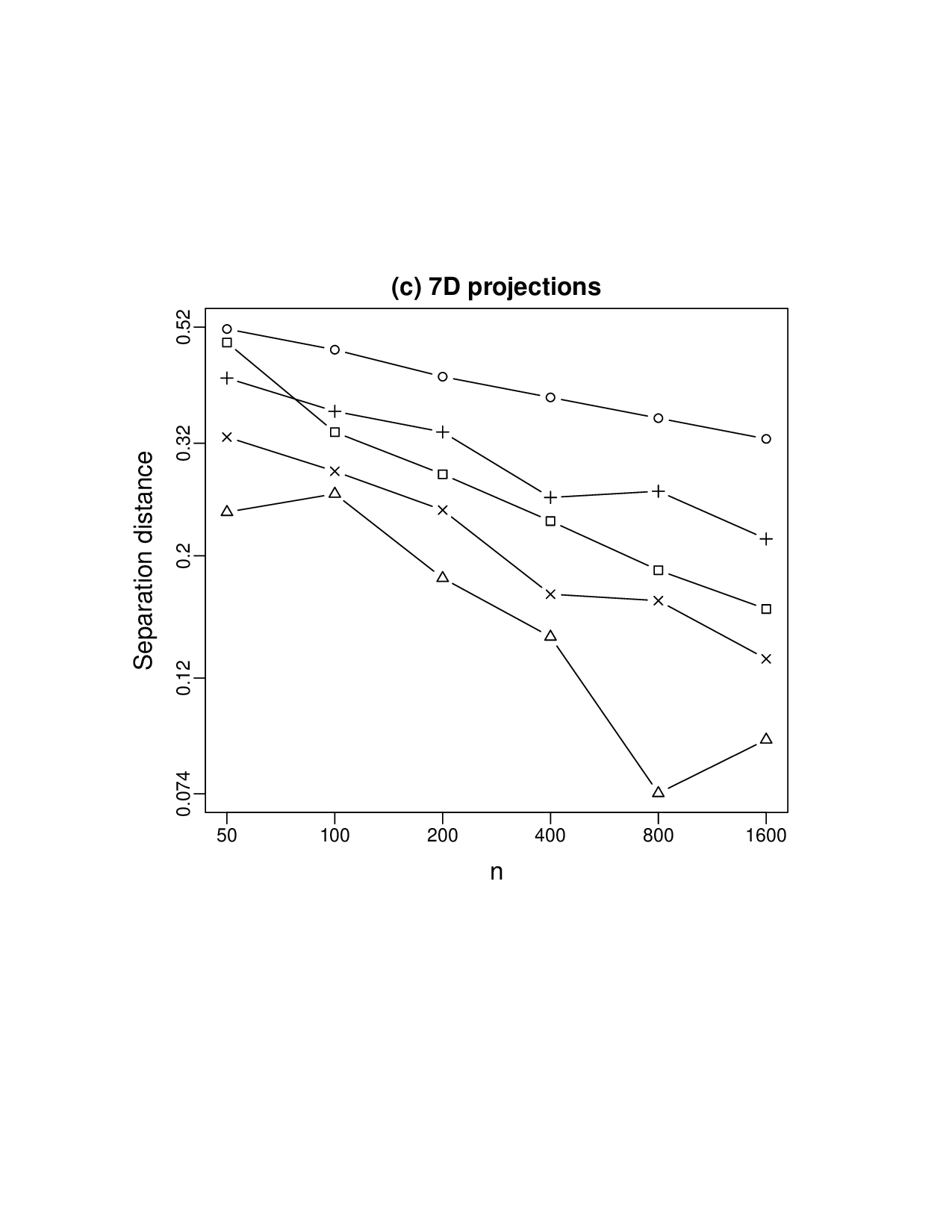}
\hspace{.2cm}
\includegraphics[width=7cm]{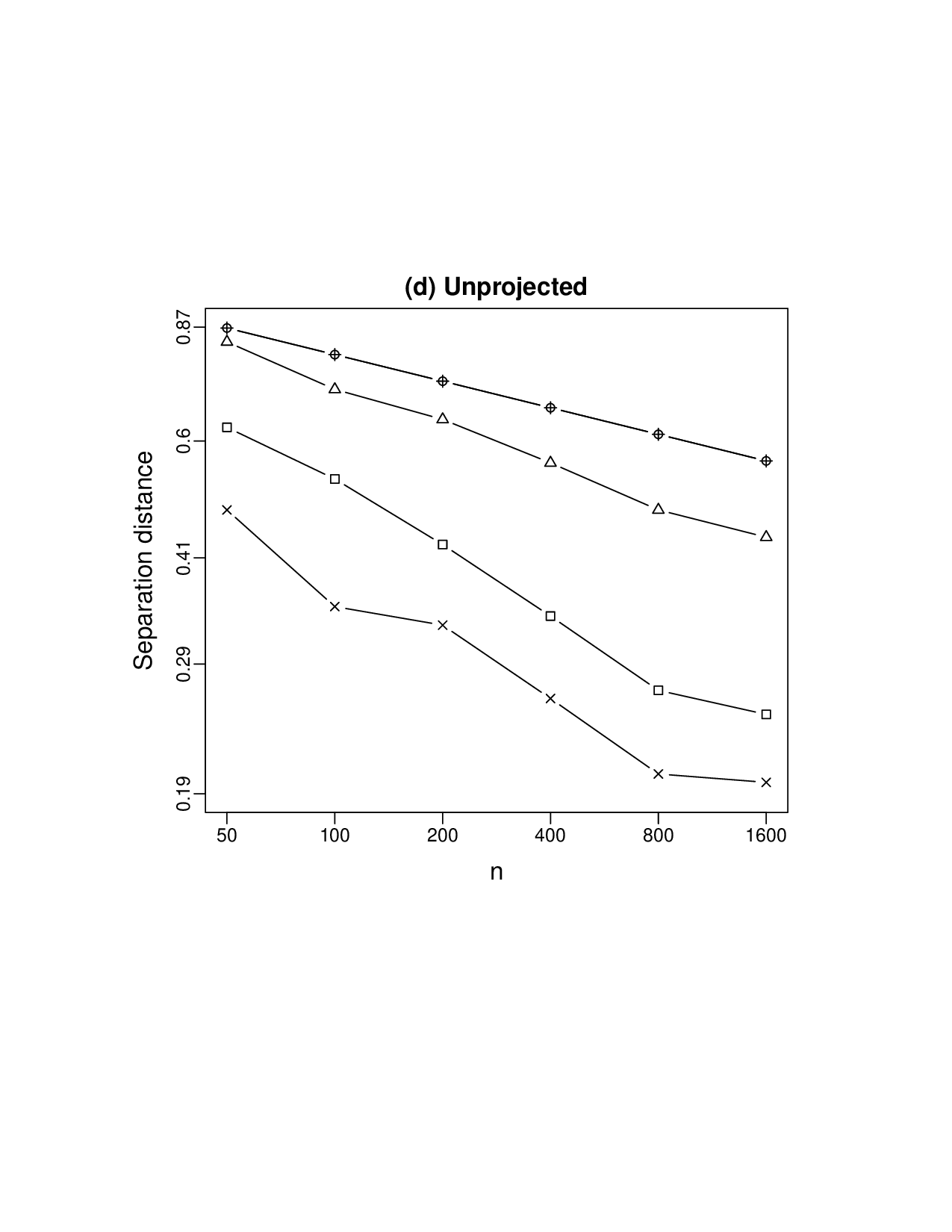}
\caption{Maximum projective separation distance in (a) five-, (b) six-, (c) seven-, and (d) eight-dimensional projections  
for densest packing-based maximum projection designs using $\varphi(D)$ (circle), 
designs generated from random rotations using $\varphi(D)$ (plus), 
maximin distance Latin hypercube designs (triangle), 
maximum projection designs (square), 
and scrambled nets (cross) 
in eight dimensions. 
}
\label{fig:p8b}
\end{center}
\end{figure}

Moreover, one advantage of densest packing-based maximum projection designs is that the methodology allows flexible score functions to select the empirically best design. 
For instance, when from prior knowledge we are certain that there are at least five active variables, the $\psi(D)$ criterion may not be optimal. 
A more specialized score function is $\varphi(D) = \sum_{r=5}^8 \log\{ \max_{\text{card}(\gamma)=r} \rho_{\gamma}(D) n^{1/r} \}$, 
which reflects the separation distance properties of designs projected onto five- and larger-dimensional subspaces. 
It is equivalent to $\sum_{r=5}^7 \log\{ \max_{\text{card}(\gamma)=r} \rho_{\gamma}(D) n^{1/r} \}$ for eight-dimensional densest packing-based designs because all of them have the same $\rho(D)$. 
Figure~\ref{fig:p8b} displays the numerical comparison results in $\max_{\text{card}(\gamma)=r} \rho_{\gamma}(D)$ with $r=5,6,7,8$, in which we use $\varphi(D)$ instead of $\psi(D)$ in Step~3 to select the final design. 
From the results, densest packing-based maximum projection designs are uniformly the best for moderate-to-large $n$. 
Similar to the $p=4$ case, maximin distance Latin hypercube designs are sub-optimal in $\max_{\text{card}(\gamma)=r} \rho_{\gamma}(D)$ with $r=5,6,7$, 
maximum projection designs are sub-optimal in $\rho(D)$, 
scrambled nets are uniformly no better than maximum projection designs, 
and densest packing-based designs generated from random rotations are uniformly no better than densest packing-based maximum projection designs. 
We thus conclude that densest packing-based maximum projection designs are uniformly optimal among the five types of designs provided that there are at least five active variables. 

One may argue that the comparison is unfair for maximum projection designs because they optimize $\psi(D)$, which accounts for both low- and high-dimensional projections. 
If we use the same numerical optimization technique for maximum projection designs but optimize $\varphi(D)$ instead of $\psi(D)$, we should generate designs much better in high-dimensional projections. 
However, as far as we know, from the algorithm the $\psi(D)$ criterion cannot be flexibly modified to account for prior knowledge on the level of sparsity. 
This is because the criterion needs to be repeatedly evaluated and therefore has to be very fast in computation. 
On the other hand, for densest packing-based designs we can afford to use a much slower criterion, such as $\varphi(D)$, because it will only be evaluated 100 times. 
Even using $\psi(D)$, it takes $7.3$ and $14.7$ minutes of one $2.70$GHz core of a laptop to generate a 400 point maximum projection design using default settings for $p=4$ and $p=8$, respectively,  
while it takes 1, 19, and 20 seconds to generate a 400 point densest packing-based maximum projection design for $p=4$ with $\psi(D)$, $p=8$ with $\psi(D)$, and $p=8$ with $\varphi(D)$, respectively.  

For $p=16$, the algorithm to generate densest packing-based maximum projection designs is much slower than that for $p=8$. 
To reduce computation, we recommend trying 10 magic rotations instead of 100. 
Since the fourth-lowest $q_l \in \mathbb{N}$ that may satisfy (\ref{eqn:Vq}) is 17, we recommend to use $q_1=2$, $q_2=5$, $q_3=13$, and $q_4=17$. 
We numerically find the 10 choices of $V_1$, $V_2$, $V_3$, and $V_4$ that have the lowest $\max_{i,j} v_{1,i,j}$, $\max_{i,j} v_{2,i,j}$, $\max_{i,j} v_{3,i,j}$, and $\max_{i,j} v_{4,i,j}$, respectively, while satisfying (\ref{eqn:Vq}). 
Crossing these choices yields 10000 magic rotations and we recommend trying 10 randomly selected rotations among them. 

Similar to the $p=4$ and $p=8$ cases, we numerically compare designs in sixteen dimensions. 
We only try 10 rotations for densest packing-based designs that are generated from random rotations because they are as slow as our proposed designs in construction. 
From the results, there is no method that is optimal in all projections. 
Maximin distance Latin hypercube designs are the best for $\max_{\text{card}(\gamma)=r} \rho_{\gamma}(D)$ with $r=1,2$, scrambled nets are the best for $\max_{\text{card}(\gamma)=r} \rho_{\gamma}(D)$ with $r=3,4,5$, maximum projection designs are the best for $\max_{\text{card}(\gamma)=r} \rho_{\gamma}(D)$ with $6\leq r\leq 13$, and densest packing-based maximum projection designs are the best for $\max_{\text{card}(\gamma)=r} \rho_{\gamma}(D)$ with $r=14, 15, 16$ and $n\geq 400$. 
Therefore, we conclude that densest packing-based maximum projection designs are useful when most variables are likely to be active. 
A figure displaying the numerical results for $p=16$ is provided in appendix. 

For $p=3$, theoretically we can generate a four-dimensional densest packing-based maximum projection design and then project it onto $\gamma = \{1,2,3\}$. 
However, from numerical results not provided here, we find that these designs are in general no better than designs generated from the three-dimensional densest packing with random rotations unless $n$ is very large. 
The case for other $p$ that are not a power of 2 is similar. 
This is a major limitation of the proposed methodology. 
Planned future work is to derive magic rotations for lattices in other $p$. 
While here we concentrate on densest packing-based designs, 
magic rotations for other lattices may also be useful in constructing space-filling designs.

\appendix

\section{Proofs}
\label{sec:proof}

\begin{proof}[Proof of Proposition~1]
Without loss of generality assume $L=L(GR)$, $k=p$, $|\det(G)|=1$, 
$\rho_{\{p\}}\{D(G,R,n,\delta)\} \geq c n^{-1}$, 
and $c \leq 1$. 
Namely, for sufficiently large $m$, 
\begin{equation}\label{eqn:c1}
L(GR) \cap \left( [-m,m]^{p-1} \times [0,c_1 m^{-p+1}] \right) = \emptyset, 
\end{equation}
where $c_1 = 2^{-p+1} c \leq 2^{-p+1}$. 
It suffices to show the existence of a $d\in\mathbb{R}$ such that for any $n>2 d$, $\delta\in\mathbb{R}^p$, and $z\in[0,1]$, there is at least one design point $y \in D(G,R,n,\delta)$ 
such that $y \in [0,1]^{p-1} \times [\max(0,z- d n^{-1}),\min(1,z+d n^{-1})]$. 
Without loss of generality assume $z\leq 1/2$.

Let $\tilde y = n^{-1/p} (\tilde a^T G +\delta)$ be the point that has the highest $p$th dimensional value among the points in 
$ \{ n^{-1/p} (a^T G +\delta) : a \in \mathbb{Z}^p \} \cap \left( [0,1]^{p-1} \times (-\infty,z) \right)$. 
Clearly such $\tilde a \in \mathbb{Z}^p$ exists and is unique. 
Without loss of generality assume $\tilde y \in [0,1/2]^{p-1} \times (-\infty,z)$. 
If there is an $a\in\mathbb{Z}^p$ such that $a^TG \in [0,n^{1/p}/2]^{p-1} \times (0, d n^{1/p} n^{-1})$, then $n^{-1/p} \{(\tilde a^T+a) G +\delta\} \in [0,1]^{p-1} \times [\max(0,z-d n^{-1}),\min(1,z+d n^{-1})]$. 
Consequently, it suffices to show the existence of a $c_2>0$ such that the set $L(GR) \cap ( [0,m]^{p-1} \times (0,c_2 m^{-p+1}] )$ is not empty for for sufficiently large $m$. 
In the rest of the proof we define $c_2$ and find an element of the set. 

When $p=2$, from Minkowski's first theorem, there exists an $x^{(1)}$ such that 
\[ x^{(1)} \in L(GR) \cap \left( [-c_1m,c_1m] \times [-1/(c_1m),1/(c_1m)] \right) \]
and $x^{(1)} \neq 0$. 
Considering $x^{(1)}$ and $-x^{(1)}$, there exists an $x^{(2)}$ such that 
\[ x^{(2)} \in L(GR) \cap \left( [-c_1m,c_1m] \times [0,1/(c_1m)] \right). \]

From (\ref{eqn:c1}), 
\[ L(GR) \cap \left( [-c_1m,c_1m] \times [0,1/m] \right) = \emptyset. \] 
Therefore, 
\[ x^{(2)} \in L(GR) \cap \left( [-c_1m,c_1m] \times (1/m,1/(c_1m)] \right). \]

From Minkowski's first theorem, there exists an $x^{(3)}$ such that 
\[ x^{(3)} \in L(GR) \cap \left( [-m,m] \times [-1/m,1/m] \right). \]
Considering $x^{(3)}$ and $-x^{(3)}$, there exists an $x^{(4)}$ such that 
\[ x^{(4)} \in L(GR) \cap \left( [-m,m] \times [0,1/m] \right). \]
From (\ref{eqn:c1}), 
\[ x^{(4)} \in L(GR) \cap \left\{ ( [-m,-c_1m) \cup (c_1m,m] ) \times (0,1/m] \right\}. \]

Let $c_2 = 1/c_1$.  
Considering $x^{(2)}$, $x^{(4)}$ and $x^{(2)}-x^{(4)}$,  
there exists a $y$ such that 
\[ y \in L(GR) \cap \left( [0,m] \times (0,c_2m^{-1}] \right). \]

When $p>2$, let 
\begin{eqnarray*}
b_2 &=& \left[ 1 + c_1^{p} \{2(p-2)\}^{-p(p-2)} \right]^{-1} m, \\
b_3 &=& c_1^{p} \{2(p-2)\}^{-(p-1)^2} b_2, \\
b_1 &=& c_1^{p-1} \{2(p-2)\}^{-(p-1)(p-2)} b_2, \\
\epsilon_1 &=& c_1^{-p^2+p+1} \{2(p-2)\}^{p(p-1)(p-2)} b_2^{-(p-1)} = c_1 \{2(p-2)b_3\}^{-(p-1)}, \\
\epsilon_2 &=& c_1^{-p^2+2p} \{2(p-2)\}^{(p-1)^2(p-2)} b_2^{-(p-1)} = c_1 b_1^{-(p-1)}. 
\end{eqnarray*}
Then $b_3 < 2(p-2)b_3 \leq b_1 < b_2$, $\epsilon_1>\epsilon_2$, and 
\begin{equation}\label{eqn:x1}
(2\epsilon_1) (2b_1) (2b_3)^{p-2} = (2\epsilon_2) (2b_2) (2b_3)^{p-2} = 2^p. 
\end{equation}

From Minkowski's first theorem and (\ref{eqn:x1}), there exists an $x^{(1)}$ such that 
\[ x^{(1)} \in L(GR) \cap \left( [-b_1,b_1] \times [-b_3,b_3]^{p-2} \times [-\epsilon_1,\epsilon_1] \right) \]
and $x^{(1)} \neq 0$. 
Considering $x^{(1)}$ and $-x^{(1)}$, there exists an $x^{(2)}$ such that 
\[ x^{(2)} \in L(GR) \cap \left( [-b_1,b_1] \times [-b_3,b_3]^{p-2} \times [0,\epsilon_1] \right). \]

From (\ref{eqn:c1}), 
\[ L(GR) \cap ( [-b_1,b_1]^{p-1} \times [0,\epsilon_2] ) = \emptyset. \] 
Therefore, 
\[ x^{(2)} \in L(GR) \cap \left( [-b_1,b_1] \times [-b_3,b_3]^{p-2} \times (\epsilon_2,\epsilon_1] \right). \]

From Minkowski's first theorem and (\ref{eqn:x1}), there exists an $x^{(3)}$ such that 
\[ x^{(3)} \in L(GR) \cap \left( [-b_2,b_2] \times [-b_3,b_3]^{p-2} \times [-\epsilon_2,\epsilon_2] \right). \]
Considering $x^{(3)}$ and $-x^{(3)}$, there exists an $x^{(4)}$ such that 
\[ x^{(4)} \in L(GR) \cap \left( [-b_2,b_2] \times [-b_3,b_3]^{p-2} \times [0,\epsilon_2] \right). \]
From (\ref{eqn:c1}), 
\[ x^{(4)} \in L(GR) \cap \left\{ ( [-b_2,-b_1) \cup (b_1,b_2] ) \times [-b_3,b_3]^{p-2} \times (0,\epsilon_2] \right\}. \]

Considering $x^{(2)}$, $x^{(4)}$ and $x^{(2)}-x^{(4)}$, 
there exists an $x^{(5)}$ such that 
\[ x^{(5)} \in L(GR) \cap \left( [0,b_2] \times [-2b_3,2b_3]^{p-2} \times (0,\epsilon_1] \right). \]

From (\ref{eqn:c1}), 
\[ L(GR) \cap \left\{ [-2(p-2)b_3,2(p-2)b_3]^{p-1} \times [0,\epsilon_1] \right\} = \emptyset. \] 
Therefore, 
\[ x^{(5)} \in L(GR) \cap \left\{ (2(p-2)b_3,b_2] \times [-2b_3,2b_3]^{p-2} \times (0,\epsilon_1] \right\}. \]

Let $y^{(1)}= x^{(5)}$. 
Similarly, for any $k=2,\ldots,p-1$, there exists a $y^{(k)}\in L(GR)$ such that 
$y^{(k)}_p \in (0,\epsilon_1]$, $y^{(k)}_k \in (2(p-2)b_3,b_2]$ and $y^{(k)}_j \in [-2b_3,2b_3]$ for any $j\neq p$, $j\neq k$. 
Let $y = \sum_{j=1}^{p-1} y^{(j)}$. 
Then 
\[ y \in L(GR) \cap \left\{ (0,b_2+2(p-2)b_3]^{p-1} \times (0,(p-1)\epsilon_1] \right\}. \]
Let $ c_2 = (p-1) c_1^{-p^2+p+1} \{2(p-2)\}^{p(p-1)(p-2)} [ 1 + c_1^{p} \{2(p-2)\}^{-p(p-2)} ]^{p-1} $ 
$ = (p-1)\epsilon_1 m^{p-1}$. 
Because $b_2+2(p-2)b_3=m$, 
we have 
\[ y \in L(GR) \cap \left( (0,m]^{p-1} \times (0, c_2 m^{-(p-1)} ] \right). \]
Combining the two cases of $p$ completes the proof. 
\end{proof}

\begin{proof}[Proof of Theorem~1]
Let $f(a) = (f_1(a),\ldots,f_p(a)) = a^T R_z(V_1,\ldots,V_z,q_1,\ldots,q_z)$. 
Clearly, $R_z(V_1$, $\ldots,V_z,q_1,\ldots,q_z) = (V_z \otimes .s \otimes V_1) \{Q(q_z) \otimes .s \otimes Q(q_1)\} \{W(V_z,q_z) \otimes .s \otimes W(V_1,q_1)\}$. 
Here $V_z \otimes .s \otimes V_1$ is an integer matrix with full rank, 
$W(V_z,q_z) \otimes .s \otimes W(V_1,q_1)$ is a positive diagonal matrix, 
and the entries in each column of $Q(q_z) \otimes .s \otimes Q(q_1)$ are rationally independent~\citep{Besicovitch}. 
As a result, for any $k=1,\ldots,p$, $f_k(a) \neq 0$ for any nonzero $a\in \mathbb{Z}^p$. 
On the other hand, let $w = \prod_{l=1}^z \{ w_1(V_l,q_l) w_2(V_l,q_l) \}^{-p}$, it is not hard to see that $w \prod_{k=1}^p f_k(a)$ is a polynomial of $a$ with integer coefficients. 
Therefore, $w \prod_{k=1}^p f_k(a) \in \mathbb{Z}$ for any $a\in \mathbb{Z}^p$. 
As a result, $|w \prod_{k=1}^p f_k(a)| \geq 1$ for any nonzero $a\in \mathbb{Z}^p$.

Now consider any $f(\bar a), f(\tilde a) \in \prod_{k=1}^p[-\delta_k,n^{1/p}-\delta_k]$ and $\bar a\neq \tilde a$. 
Clearly, $|f_k(\bar a)-f_k(\tilde a)| \leq n^{1/p}$ for any $k$ 
and $\prod_{k=1}^p |f_k(\bar a)-f_k(\tilde a)| \geq w^{-1}$. 
Therefore, for any $\emptyset \neq \gamma \subset \{1,\ldots,p\}$, 
$\prod_{k\in \gamma} |f_k(\bar a)-f_k(\tilde a)| \geq w^{-1} n^{(-p+|\gamma|)/p}$. 
Therefore, 
\[ \left\{ \sum_{k \in \gamma} \left[ n^{-1/p} \{f_k(\bar a)-f_k(\tilde a)\} \right]^2 \right\}^{1/2}
  \geq |\gamma|^{1/2} w^{-1/|\gamma|} n^{-1/|\gamma|}. \]
This concludes that designs generated from $L\{R_z(V_1,\ldots,V_z,q_1,\ldots,q_z)\}$ possess quasi-optimal separation distance on all of the projections. 
From Proposition~1, designs generated from $L\{R_z(V_1,\ldots,V_z,q_1,\ldots,q_z)\}$ also possess quasi-optimal fill distance on univariate projections. 
\end{proof}

\begin{proof}[Proof of Proposition~2]
Let $f(a) = (f_1(a),\ldots,f_p(a)) = a^T GR$. 
There exists a $c >0$ such that for any $f(\bar a), f(\tilde a) \in \prod_{k=1}^p[-\delta_k,\{n|\det(G)|\}^{1/p}-\delta_k]$, $\bar a\neq \tilde a$, $n\geq 2$, and $\delta \in \mathbb{R}^p$, 
\[ \left\{ \sum_{k \in \gamma} \left[ \{n|\det(G)|\}^{-1/p} \{f_k(\bar a)-f_k(\tilde a)\} \right]^2 \right\}^{1/2}
  \geq c n^{-1/|\gamma|}. \]

Let 
$s = |\det(H)|/|\det(G)| \in \mathbb{N}$. 
Then for any $f(\bar a), f(\tilde a) \in \prod_{k=1}^p[-\delta_k,\{n|\det(H)|\}^{1/p}-\delta_k]$, $\bar a\neq \tilde a$, $n>0$, and $\delta \in \mathbb{R}^p$, 
we have 
\[ \left\{ \sum_{k \in \gamma} \left[ \{n|\det(H)|\}^{-1/p} \{f_k(\bar a)-f_k(\tilde a)\} \right]^2 \right\}^{1/2}
  \geq c s^{-1/|\gamma|} n^{-1/|\gamma|}. \]
\end{proof}

\begin{proof}[Proof of Corollary~1]
The statements hold because $L\{G_{\text{DP},4}R_2(V_1,V_2,q_1,q_2)\} \subset L\{R_2(V_1,V_2,q_1,q_2)\}$, $L\{G_{\text{DP},8}R_3(V_1,V_2,V_3,q_1,q_2,q_3)\} \subset L\{R_3(V_1,V_2,V_3,q_1,q_2,q_3)\}$, $L\{G_{\text{DP},16}R_4(V_1,V_2,V_3,V_4,q_1,q_2,q_3,q_4)\} \subset L\{R_4(V_1,V_2,V_3,V_4,q_1,q_2,q_3,q_4)\}$, $L\{G_{\text{TC},8}R_3(V_1,V_2,V_3,q_1,q_2,q_3)\} \subset L\{R_3(V_1,V_2,V_3,q_1,q_2,q_3)\}$, and $L\{G R_z(V_1,\ldots,V_z,q_1,\ldots,q_z)\} \subset L\{R_z(V_1,\ldots,V_z,q_1,\ldots,q_z)\}$. 
\end{proof}

\begin{proof}[Proof of Theorem~2]
Let $f(a) = (f_1(a),\ldots,f_4(a)) = a^T G_{\text{TC},4}R_2(V_1,V_2,q_1,q_2)$. 
We have 
\[ G_{\text{TC},4}R_2(V_1,V_2,q_1,q_2) = \bar B_4 \{Q(q_2) \otimes Q(5)\} \{W(V_2,q_2) \otimes W(V_1,5)\} .\] 
Here $\bar B_4$ is an integer matrix with full rank, 
$W(V_2,q_2) \otimes W(V_1,5)$ is a positive diagonal matrix, 
and each column of $Q(q_2) \otimes Q(5)$ consists of $\{1,5^{1/2},q_2^{1/2},5^{1/2}q_2^{1/2}\}$ or their opposite numbers. 
As a result, for any $k=1,\ldots,4$, $f_k(a) \neq 0$ for any nonzero $a\in \mathbb{Z}^4$. 
On the other hand, let $w = \prod_{l=1}^2 \{ w_1(V_l,q_l) w_2(V_l,q_l) \}^{-4}$, it is not hard to see that $w \prod_{k=1}^4 f_k(a)$ is a polynomial of $a$ with integer coefficients. 
Therefore, $w \prod_{k=1}^4 f_k(a) \in \mathbb{Z}$ for any $a\in \mathbb{Z}^4$. 
As a result, $|w \prod_{k=1}^4 f_k(a)| \geq 1$ for any nonzero $a\in \mathbb{Z}^4$.

Now consider any $f(\bar a), f(\tilde a) \in \prod_{k=1}^4[-\delta_k,n^{1/4}-\delta_k]$ and $\bar a\neq \tilde a$. 
Clearly, $|f_k(\bar a)-f_k(\tilde a)| \leq n^{1/4}$ for any $k$ 
and $\prod_{k=1}^4 |f_k(\bar a)-f_k(\tilde a)| \geq w^{-1}$. 
Therefore, for any $\emptyset \neq \gamma \subset \{1,\ldots,4\}$, 
$\prod_{k\in \gamma} |f_k(\bar a)-f_k(\tilde a)| \geq w^{-1} n^{(-4+|\gamma|)/4}$. 
Therefore, 
\[ \left\{ \sum_{k \in \gamma} \left[ n^{-1/4} \{f_k(\bar a)-f_k(\tilde a)\} \right]^2 \right\}^{1/2}
  \geq |\gamma|^{1/2} w^{-1/|\gamma|} n^{-1/|\gamma|}. \]
This concludes that designs generated from $L\{G_{\text{TC},4}R_2(V_1,V_2,q_1,q_2)\}$ possess quasi-optimal separation distance on all of the projections. 
From Proposition~1, designs generated from $L\{G_{\text{TC},4}R_2(V_1,V_2,q_1,q_2)\}$ also possess quasi-optimal fill distance on univariate projections.
\end{proof}

\section{A design matrix}
\label{sec:figure}

The design matrix of the design introduced in the example can be expressed as $ M = \{n|\det(G)|\}^{-1/4} ( AGR + \delta^T )$, 
where 
{\scriptsize
\[ A = \left(\begin{array}{rrrr}
0 & 0 & 0 & 0 \\
1 & 1 & 0 & -1 \\
1 & 0 & 1 & -1 \\
0 & 1 & 1 & -1 \\
1 & 0 & 0 & 0 \\
0 & 1 & 0 & 0 \\
0 & 0 & 1 & 0 \\
-1 & 1 & 0 & 0 \\
-1 & 0 & 1 & 0 \\
-1 & 0 & 0 & 1 \\
-1 & -1 & 0 & 1 \\
0 & -1 & 1 & 0 \\
0 & -1 & 0 & 1 \\
1 & -1 & 0 & 0 \\
1 & -1 & 1 & 0 \\
-1 & 0 & -1 & 1 \\
0 & 1 & -1 & 0 \\
0 & 0 & -1 & 1 \\
-1 & 1 & -1 & 1 \\
1 & 0 & -1 & 0 \\
1 & 1 & -2 & 0 \\
1 & 1 & -1 & 0 \\
0 & -1 & -1 & 1 \\
1 & -1 & -1 & 1 \\
-1 & -1 & -1 & 2 \\
-1 & 0 & 0 & 0 \\
0 & 1 & 0 & -1 \\
0 & 0 & 1 & -1 \\
0 & 1 & 1 & -2 \\
1 & 0 & 0 & -1 \\
1 & 1 & 1 & -2 \\
0 & -1 & 0 & 0 \\
1 & -2 & 0 & 0 \\
-1 & -1 & 1 & 0 \\
0 & 0 & -1 & 0 \\
1 & 1 & -1 & -1 \\
1 & 0 & -2 & 0 \\
-1 & 1 & -1 & 0 \\
-1 & -1 & -1 & 1 \\
1 & -1 & -1 & 0 
\end{array}\right)
\text{ and }
M = \left(\begin{array}{rrrr}
0.454 & 0.473 & 0.532 & 0.546 \\
0.701 & 0.841 & 0.623 & 0.683 \\
0.647 & 0.511 & 0.953 & 0.629 \\
0.279 & 0.758 & 0.816 & 0.721 \\
0.784 & 0.420 & 0.585 & 0.876 \\
0.416 & 0.666 & 0.448 & 0.968 \\
0.362 & 0.336 & 0.778 & 0.914 \\
0.086 & 0.720 & 0.395 & 0.638 \\
0.032 & 0.390 & 0.725 & 0.584 \\
0.169 & 0.298 & 0.356 & 0.831 \\
0.208 & 0.105 & 0.440 & 0.409 \\
0.401 & 0.143 & 0.862 & 0.492 \\
0.538 & 0.051 & 0.493 & 0.739 \\
0.822 & 0.227 & 0.669 & 0.454 \\
0.731 & 0.090 & 0.915 & 0.822 \\
0.261 & 0.435 & 0.110 & 0.462 \\
0.508 & 0.803 & 0.202 & 0.599 \\
0.591 & 0.381 & 0.163 & 0.792 \\
0.223 & 0.628 & 0.026 & 0.884 \\
0.876 & 0.557 & 0.339 & 0.508 \\
0.929 & 0.887 & 0.008 & 0.561 \\
0.838 & 0.750 & 0.255 & 0.929 \\
0.629 & 0.188 & 0.247 & 0.371 \\
0.959 & 0.135 & 0.300 & 0.701 \\
0.345 & 0.013 & 0.072 & 0.655 \\
0.124 & 0.527 & 0.478 & 0.216 \\
0.371 & 0.895 & 0.570 & 0.353 \\
0.317 & 0.565 & 0.900 & 0.299 \\
0.234 & 0.987 & 0.938 & 0.106 \\
0.739 & 0.648 & 0.707 & 0.261 \\
0.564 & 0.933 & 0.992 & 0.436 \\
0.492 & 0.280 & 0.615 & 0.124 \\
0.861 & 0.034 & 0.752 & 0.032 \\
0.071 & 0.197 & 0.808 & 0.162 \\
0.546 & 0.610 & 0.285 & 0.178 \\
0.792 & 0.978 & 0.377 & 0.315 \\
0.968 & 0.694 & 0.092 & 0.139 \\
0.178 & 0.857 & 0.148 & 0.269 \\
0.299 & 0.242 & 0.193 & 0.041 \\
0.914 & 0.364 & 0.422 & 0.086 
\end{array}\right). \]
}

\section{Numerical results on 16-dimensional designs}

Figure~\ref{fig:p16} below gives some numerical results comparing 16-dimensional designs. 

\begin{figure}
\begin{center}
\includegraphics[width=7cm]{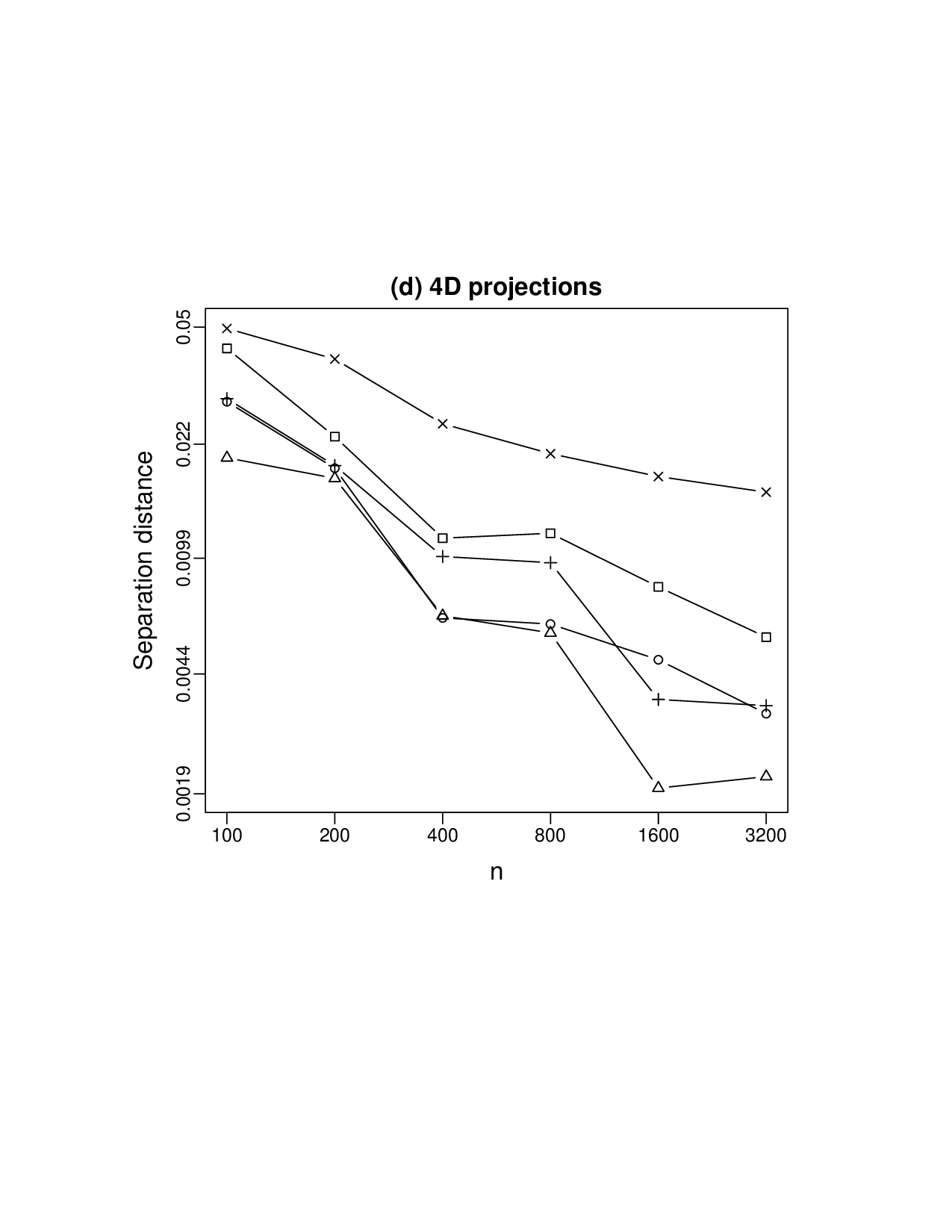}
\hspace{.2cm}
\includegraphics[width=7cm]{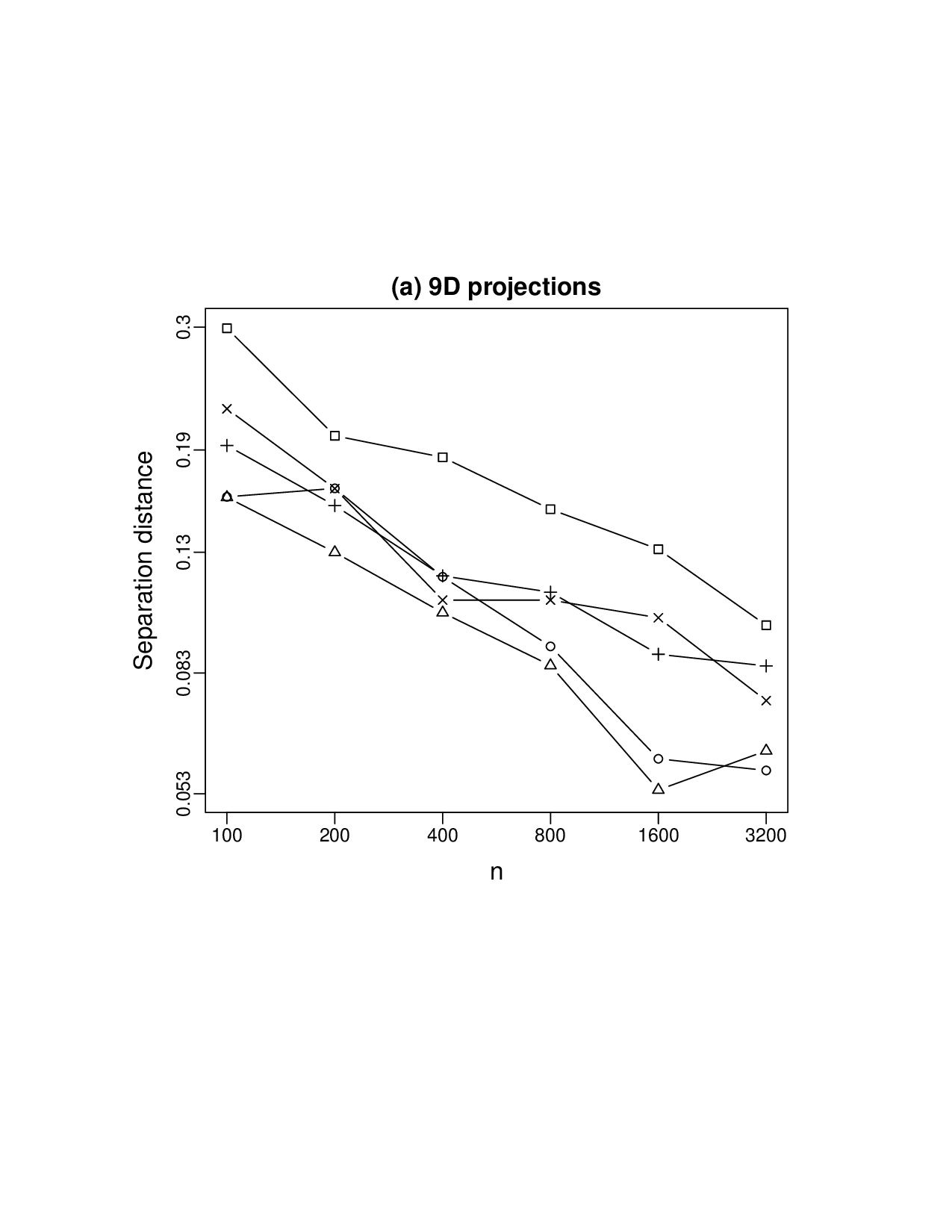}

\includegraphics[width=7cm]{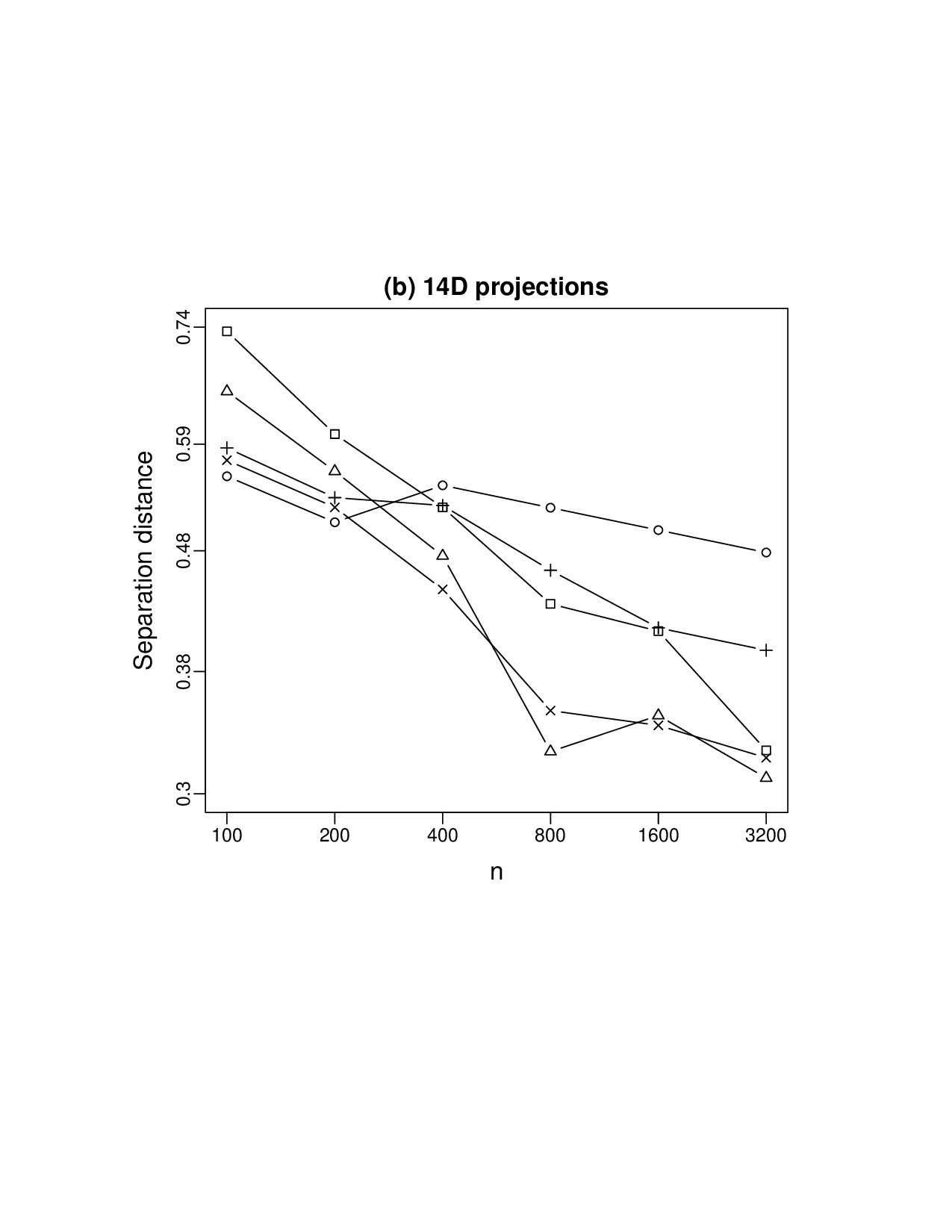}
\hspace{.2cm}
\includegraphics[width=7cm]{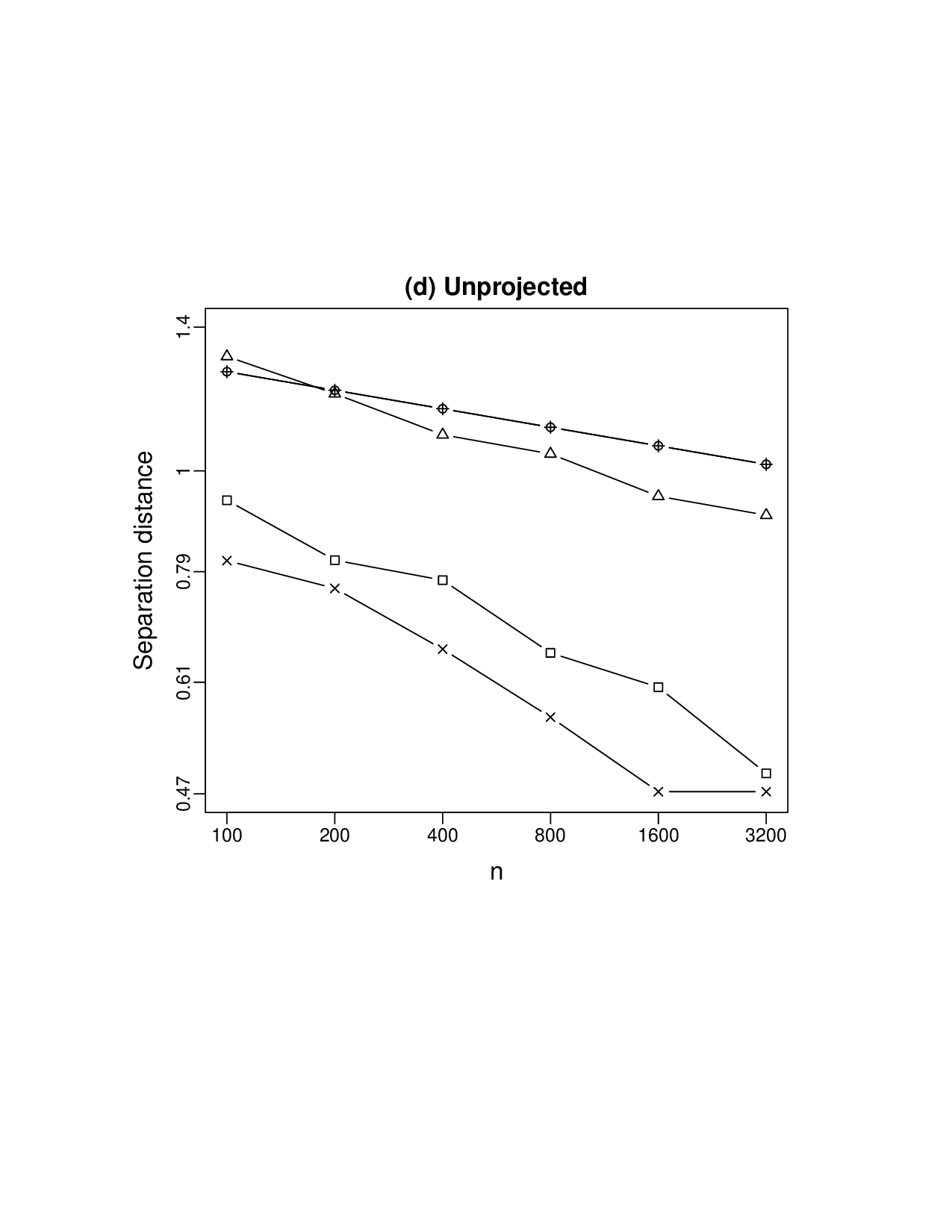}
\caption{Maximum projective separation distance in (a) four-, (b) nine-, (c) fourteen-, and (d) sixteen-dimensional projections  
for densest packing-based maximum projection designs (circle), 
designs generated from random rotations (plus), 
maximin distance Latin hypercube designs (triangle), 
maximum projection designs (square), 
and scrambled nets (cross) 
in 16 dimensions. 
}
\label{fig:p16}
\end{center}
\end{figure}

\bibliographystyle{Chicago}
\bibliography{QuasiLHDBiomk}

\begin{thebibliography}{}

\bibitem[\protect\citeauthoryear{Besicovitch}{Besicovitch}{1940}]{Besicovitch}
Besicovitch, A.~S. (1940).
\newblock On the linear independence of fractional powers of integers.
\newblock {\em Journal of the London Mathematical Society\/}~{\em s1-15\/}(1),
  3--6.

\bibitem[\protect\citeauthoryear{Conway and Sloane}{Conway and
  Sloane}{1998}]{Conway:1998}
Conway, J.~H. and N.~J.~A. Sloane (1998).
\newblock {\em Sphere Packings, Lattices and Groups}.
\newblock New York: Springer.

\bibitem[\protect\citeauthoryear{Haaland, Wang, and Maheshwari}{Haaland
  et~al.}{2018}]{Haaland:2018}
Haaland, B., W.~Wang, and V.~Maheshwari (2018).
\newblock A framework for controlling sources of inaccuracy in {G}aussian
  process emulation of deterministic computer experiments.
\newblock {\em SIAM/ASA J. Uncertainty Quantification\/}~{\em 6\/}(2),
  497--521.

\bibitem[\protect\citeauthoryear{He}{He}{2017}]{RSPD}
He, X. (2017).
\newblock Rotated sphere packing designs.
\newblock {\em J. Am. Statist. Assoc.\/}~{\em 112\/}(520), 1612--22.

\bibitem[\protect\citeauthoryear{He}{He}{2019}]{Maximin}
He, X. (2019).
\newblock Interleaved lattice-based maximin distance designs.
\newblock {\em Biometrika\/}~{\em 106\/}(2), 453--64.

\bibitem[\protect\citeauthoryear{Johnson, Moore, and Ylvisaker}{Johnson
  et~al.}{1990}]{Johnson:1990}
Johnson, M.~E., L.~M. Moore, and D.~Ylvisaker (1990).
\newblock Minimax and maximin distance designs.
\newblock {\em J. Statist. Plan. Infer.\/}~{\em 26}, 131--48.

\bibitem[\protect\citeauthoryear{Joseph, Gul, and Ba}{Joseph
  et~al.}{2015}]{Roshan:2015}
Joseph, V.~R., E.~Gul, and S.~Ba (2015).
\newblock Maximum projection designs for computer experiments.
\newblock {\em Biometrika\/}~{\em 102\/}(2), 371--80.

\bibitem[\protect\citeauthoryear{Morris and Mitchell}{Morris and
  Mitchell}{1995}]{Morris:1995}
Morris, M.~D. and T.~J. Mitchell (1995).
\newblock Exploratory designs for computational experiments.
\newblock {\em J. Statist. Plan. Infer.\/}~{\em 43\/}(3), 381--402.

\bibitem[\protect\citeauthoryear{Owen}{Owen}{1997}]{Owen:1997}
Owen, A.~B. (1997).
\newblock Scrambled net variance for integrals of smooth functions.
\newblock {\em Ann. Statist.\/}~{\em 25}, 1541--62.

\bibitem[\protect\citeauthoryear{Santner, Williams, and Notz}{Santner
  et~al.}{2003}]{Santner:book}
Santner, T.~J., B.~J. Williams, and W.~I. Notz (2003).
\newblock {\em The Design and Analysis of Computer Experiments}.
\newblock New York: Springer.

\bibitem[\protect\citeauthoryear{Siegel}{Siegel}{1989}]{Siegel:1989}
Siegel, C.~L. (1989).
\newblock {\em Lectures on the Geometry of Numbers}.
\newblock New York: Springer.

\bibitem[\protect\citeauthoryear{Siem and den Hertog}{Siem and den
  Hertog}{2007}]{Siem:2007}
Siem, A. Y.~D. and D.~den Hertog (2007).
\newblock {K}riging models that are robust with respect to simulation errors.
\newblock In {\em CentER Discussion Paper Series No. 2007-68}.

\bibitem[\protect\citeauthoryear{Steinberg and Lin}{Steinberg and
  Lin}{2006}]{Steinberg:Lin:2006}
Steinberg, D.~M. and D.~K. Lin (2006).
\newblock A construction method for orthogonal {L}atin hypercube designs.
\newblock {\em Biometrika\/}~{\em 93}, 279--88.

\bibitem[\protect\citeauthoryear{Wang and Haaland}{Wang and
  Haaland}{2019}]{Wang:2018}
Wang, W. and B.~Haaland (2019).
\newblock Controlling sources of inaccuracy in stochastic {K}riging.
\newblock {\em Technometrics\/}~{\em 61\/}(3), 309--21.

\end{thebibliography}

\end{document}